\def\l@subsection{\@tocline{2}{0pt}{2.5pc}{5pc}{}}
\DeclareRobustCommand{\SkipTocEntry}[5]{}
\let\oldfootnotemark\footnotemark
\let\oldfootnotetext\footnotetext
\let\oldfootnote\footnote
\renewcommand\footnote[1]{\addtocounter{footnote}{1}\hypertarget{fnbackref.\arabic{footnote}}{}\addtocounter{footnote}{-1}\oldfootnote{#1\fnbackref}}
\renewcommand\footnotemark{\addtocounter{footnote}{1}\hypertarget{fnbackref.\arabic{footnote}}{}\addtocounter{footnote}{-1}\oldfootnotemark}
\renewcommand\footnotetext[1]{\oldfootnotetext{#1\fnbackref}}
\newcommand{\fnbackref}{\hyperlink{fnbackref.\arabic{footnote}}{\footnotesize$\uparrow$}}
\theoremstyle{definition}
\newtheorem{dfn}{Definition}[subsection]
\theoremstyle{remark}
\newaliascnt{rmk}{dfn}
\newtheorem{rmk}[rmk]{Remark}
\newaliascnt{ex}{dfn}
\newtheorem{ex}[ex]{Example}
\theoremstyle{plain}
\newaliascnt{thm}{dfn}
\newtheorem{thm}[thm]{Theorem}
\newaliascnt{prop}{dfn}
\newtheorem{prop}[prop]{Proposition}
\newaliascnt{prop2}{dfn}
\newaliascnt{lem}{dfn}
\newtheorem{lem}[lem]{Lemma}
\newaliascnt{cor}{dfn}
\newtheorem{cor}[cor]{Corollary}
\newcommand{\mb}[1]{\mathbb{#1}}
\newcommand{\ca}[1]{\mathcal{#1}}
\newcommand{\mcr}[1]{\mathscr{#1}}
\newcommand{\nodeequation}[1]{%
  \let\label\ltx@label
  \refstepcounter{equation}%
  #1
  \quad
  (\theequation)%
}
\begin{document}


\title{Entropy of the composition of two spherical twists}


\author{Federico Barbacovi}

\address{Department of Mathematics, University College London}
\email{federico.barbacovi.18@ucl.ac.uk}


\author{Jongmyeong Kim}
\address{Center for Geometry and Physics, Institute for Basic Science (IBS), Pohang 37673, Republic of Korea}
\email{myeong@ibs.re.kr}



\begin{abstract}
  Given a categorical dynamical system, {\it i.e.} a triangulated category together with an endofunctor, one can try to understand the complexity of the system by computing the entropy of the endofunctor.
  Computing the entropy of the composition of two endofunctors is hard, and in general the result doesn't have to be related to the entropy of the single pieces.

  In this paper we compute the entropy of the composition of two spherical twists around spherical objects, showing that it depends on the dimension of the graded vector space of morphisms between them.
  As a consequence of these computations we produce new counterexamples to Kikuta--Takahashi's conjecture.
  In particular, we describe the first counterexamples in odd dimension and examples for the $d$-Calabi--Yau Ginzburg dg algebra associated to the $A_2$ quiver.
\end{abstract}


\maketitle


\tableofcontents


\section{Introduction}



In \cite{DHKK-Dynamical-systems} the authors introduced the notion of a {\it categorical dynamical system}: a couple ($\mcr{T}$, $\Phi$) of a triangulated category together with an endofunctor $\Phi : \mcr{T} \rightarrow \mcr{T}$, and that of the {\it entropy} of an endofunctor: a function $h_t(\Phi): \mb{R} \rightarrow [-\infty, +\infty)$.

Since their introduction, these ideas have received a lot of attention and many people have made contributions to the subject.
However, computing explicit examples of entropies of endofunctors is a highly non-trivial task which has been accomplished only in a few cases, e.g. tensor product with lines bundles \cite{DHKK-Dynamical-systems}, spherical twists around spherical objects \cite{Ouchi-entropy-spherical-twist}, and $\mb{P}$-twist around $\mb{P}$-objects \cite{Fan-P-twists}.

Recently, in \cite{Kim-Entropy} the second author proved a theorem that relates the entropy of the twist around a spherical functor with that of (a shift of) the cotwist.
Such result potentially allows one to estimate the entropy of any autoequivalence as it is known that any autoequivalence is the spherical twist around a spherical functor, see \cite{Seg-autoeq-spherical-twist}.
Moreover, as a fixed autoequivalence can be realized as a spherical twist in many different ways, one can try to make the computations easier by choosing a good representation as a spherical twist.

In \cite{Barb-Spherical-twists} the first author described how to realize the composition of the twists around two spherical functors as a single twist, and therefore there is a natural candidate to which the above result can be applied in order to compute the entropy of the composition of two autoequivalences.

Even though these ideas seem to be profitable the general case is out of reach for the moment.
For this reason we concentrate on the case of the composition of two spherical twists around spherical objects, which already shows interesting features.

A detailed statement would require us to consider various different cases and it goes beyond the scope of this introduction.
Hence, we will content ourselves with an imprecise formulation.

\begin{thm}
	\label{thm:first-theorem-intro}
	If $E_1$ and $E_2$ are two $d$-spherical objects in a $k$-linear, proper, dg enhanced, triangulated category $\mcr{T}$ with a split-generator and a Serre functor, and $V := \textup{Hom}^{\bullet}_{\mcr{T}}(E_2, E_1)$ satisfies $\mathrm{Hom}_{D^b(k)}(V,V[d]) = 0$, then, we can explicitly compute or \say{precisely} bound $h_t(T_{E_2} \circ T_{E_1})$ when $\dim V = 0,1,2$.
\end{thm}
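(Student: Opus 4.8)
The plan is to reduce the computation to the entropy of an endofunctor of a small ($k$-linear, proper, finite-dimensional) category, where it can be attacked by explicit mass-growth estimates. Each $d$-spherical object $E_i$ is the image of $k$ under a spherical functor $S_i\colon D^b(k)\to\mcr{T}$ with $S_i(k)=E_i$, and by \cite{Barb-Spherical-twists} the composition $T_{E_2}\circ T_{E_1}$ is the spherical twist $T_S$ around a single spherical functor $S\colon\ca{A}_{12}\to\mcr{T}$, whose source $\ca{A}_{12}$ is obtained by gluing two copies of $D^b(k)$ along a bimodule built from $V=\textup{Hom}^{\bullet}_{\mcr{T}}(E_2,E_1)$. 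The first step is to make this gluing completely explicit: one identifies $\ca{A}_{12}$ with the derived category of the upper-triangular dg algebra $\bigl(\begin{smallmatrix}k&V\\0&k\end{smallmatrix}\bigr)$ and the cotwist $C_S$ of $S$ with a concrete Fourier--Mukai-type endofunctor of $\ca{A}_{12}$, assembled from the cotwists of $S_1$ and $S_2$ --- each of which is just a shift because it comes from a spherical object.

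The hypothesis $\mathrm{Hom}_{D^b(k)}(V,V[d])=0$ --- equivalently, no two graded pieces of $V$ differ in cohomological degree by exactly $d$ --- enters precisely at this point: it is the vanishing of the space carrying the obstruction to formality of the gluing, so that $\ca{A}_{12}$ really is the triangular dg algebra above (no higher products) and $C_S$ takes its simplest form. With this in hand, \cite{Kim-Entropy} expresses $h_t(T_S)$ through the entropy of a shift of $C_S$, so it suffices to compute the entropy of the explicit endofunctor $\Psi$ of $\ca{A}_{12}$ obtained this way; moreover $\ca{A}_{12}$ carries a two-object generating set on which $\Psi$ acts in a controlled, essentially combinatorial manner.

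We then run the case analysis on $\dim_k V$. If $\dim V=0$ the two spherical functors are orthogonal, $\ca{A}_{12}\simeq D^b(k)\oplus D^b(k)$, $\Psi$ is a pair of shifts, and $h_t(T_{E_2}\circ T_{E_1})$ is read off factorwise from \cite{Ouchi-entropy-spherical-twist}. If $\dim V=1$, say $V\cong k[\ell]$, then $\ca{A}_{12}$ is a shift of $D^b$ of the $A_2$ path algebra and $\Psi$ is, up to shift, a Coxeter-type functor; one computes the mass $\lVert\Psi^{n}(G)\rVert$ of a generator $G$ directly, reads off the linear recursion it satisfies (its characteristic polynomial has spectral radius $1$, so the growth is polynomial on $K$-theory but may be exponential for the mass), and obtains $h_t$ exactly. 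If $\dim V=2$ there are two subcases ($V$ concentrated in one degree, or split over two degrees not differing by $d$); here the sequences $\dim\textup{Hom}^{\bullet}(G,\Psi^{n}G)$ satisfy a Fibonacci-type recursion, again with spectral radius $1$, and one brackets $\lVert\Psi^{n}(G)\rVert$ between comparable exponentials to get the stated "precise" bounds. Assembling the three cases yields $h_t(T_{E_2}\circ T_{E_1})$, and comparing it with $\log\rho$ of the induced endomorphism of Hochschild homology (or of the numerical Grothendieck group) --- a finite linear-algebra computation --- exhibits the new counterexamples to the Kikuta--Takahashi conjecture, including the odd-dimensional and $A_2$-Ginzburg examples.

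The main obstacle is the mass-growth computation for $\Psi^{n}$, most acutely when $\dim V=2$: one must control not just the total dimension of $\textup{Hom}^{\bullet}(G,\Psi^{n}G)$ but how it is distributed across cohomological degrees, because mass growth is sensitive to shifts and to the cancellation occurring in the triangles that build $\Psi^{n}(G)$ from $G$. Producing a clean two-sided estimate --- a lower bound, for instance via a well-chosen stability condition or the non-vanishing of a $K$-theoretic pairing, and an upper bound via the filtration coming from the two-object generating set --- is the technical heart of the argument, and it is exactly where these two bounds fail to coincide that one is forced to settle for inequalities rather than exact values.
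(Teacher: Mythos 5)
Your overall architecture matches the paper's: realize $T_{E_2}\circ T_{E_1}$ as a twist around a single spherical functor $f\colon D(R)^c\to\mcr{T}$ with $R$ the upper-triangular algebra $k\oplus k\oplus V$ (via \cite{Barb-Spherical-twists}), identify the cotwist with a shift of the Serre functor of $D(R)^c$, and feed this into the second author's twist--cotwist entropy comparison from \cite{Kim-Entropy}. The case split on $\dim V$ and the identifications for $\dim V=0$ (orthogonal sum of two copies of $D^b(k)$) and $\dim V=1$ (the $A_2$ path algebra, fractional Calabi--Yau) are also the paper's.

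However, you misidentify the role of the hypothesis $\mathrm{Hom}_{D^b(k)}(V,V[d])=0$. You claim it kills the obstruction to formality of the glued dg algebra. This is not the issue: the paper observes explicitly that the upper-triangular dg algebra $k\oplus k\oplus W$ is \emph{always} formal (one can choose a splitting $H^\bullet(W)\to W$ and extend it multiplicatively), so one may replace it by the graded algebra $R=k\oplus k\oplus V$ regardless. The hypothesis is instead needed to verify the \emph{technical condition of part (1) of the entropy theorem}, namely that the essential image of $f^R$ contains a split-generator of $D(R)^c$. Concretely, via \autoref{lem:tech-condition-dt-triangle} this reduces to proving $\mathrm{Hom}_{D(R)}(R^\ast[-d],R[1])=H^{1+d}(R^!)=0$; this is broken into the four vanishings of \autoref{lem:vanishing-R-!}, three of which can be forced by replacing $E_1$ by $E_1[n]$ for $n\gg0$, leaving exactly the shift-invariant condition $(V^\ast\otimes_k V)^d=\mathrm{Hom}_{D^b(k)}(V,V[d])=0$. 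Without recognizing this, you cannot invoke \autoref{thm:entropy-twist-spherical-functor} at all, so the deduction does not go through as stated.

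A secondary divergence: for $\dim V=2$ you propose to bracket the mass of $\Psi^n(G)$ directly, ``for instance via a well-chosen stability condition or the non-vanishing of a $K$-theoretic pairing.'' The paper's route is different and more concrete: compute $h_0(\mb{S}_R)$ via Elagin's explicit recurrence for $\dim H^n((A^\ast)^{\otimes_A m})$ (getting $h_0=0$ when $\dim W=2$), compute the asymptotic slopes $\tau^\pm(T_2\circ T_1)$ from the width $w=\max V-\min V$ of $V$, and then interpolate between these using \autoref{prop:bound-entropies} together with continuity of $t\mapsto h_t$. Your sketch of the $\dim V=2$ case has no mechanism to control the distribution of $\mathrm{Hom}^\bullet(G,\Psi^nG)$ across degrees, which is precisely what the width $w$ and the shifting-number bound supply; the stability-condition idea, while plausible in spirit, is not substantiated and would need to be developed independently.
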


In contrast to the theory of entropy for dynamical systems from which it draws inspiration, the entropy of endofunctors naturally incorporates the dependence on a real variable $t \in \mb{R}$.
When evaluating the entropy at $0$, $h_0(\Phi)$, we speak of the {\it categorical entropy} of $\Phi$.

Even though we are not able to compute the entropy of $T_{E_2} \circ T_{E_1}$ for all values of $t \in \mb{R}$, we are able to compute its categorical entropy.
More precisely, we have

\begin{thm}
	With the same notation and assumptions as in \autoref{thm:first-theorem-intro}, we have
	\[
		h_0(T_{E_2} \circ T_{E_1}) = \left\{
		\begin{array}{ll}
			0 & \dim V = 0,1,2\\
			\displaystyle{\log \left(\frac{(\dim V)^2 - 2 + \sqrt{(\dim V)^4 - 4(\dim V)^2}}{2} \right)} > 0 & \dim V \geq 3
		\end{array}
		\right..
	\]
\end{thm}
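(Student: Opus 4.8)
The plan is to turn the composition into a single spherical twist, pass to its cotwist, and then compute the categorical entropy of that cotwist by a rank‑two Grothendieck‑group computation.

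First I would invoke \cite{Barb-Spherical-twists} to write $\Phi:=T_{E_2}\circ T_{E_1}$ as the spherical twist $T_S$ around a spherical functor $S\colon\mcr A\to\mcr T$, where $\mcr A$ is the gluing of two copies of $D^b(k)$ along $V=\textup{Hom}^{\bullet}_{\mcr T}(E_2,E_1)$; concretely $\mcr A\simeq\mathrm{Perf}(\Lambda_V)$ for the upper‑triangular dg algebra $\Lambda_V=\left(\begin{smallmatrix}k&V\\0&k\end{smallmatrix}\right)$, which when $V$ is concentrated in a single cohomological degree is the derived category of the $(\dim V)$‑Kronecker quiver. The rôle of the hypothesis $\mathrm{Hom}_{D^b(k)}(V,V[d])=0$ is to guarantee that the gluing data — and hence $\mcr A$ and the cotwist $C_S$ — behave exactly as in that ungraded model rather than being deformed by the internal grading of $V$. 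By \cite{Kim-Entropy} the entropy of $T_S$ is computed by a shift of the cotwist $C_S$, and since shifts do not change the entropy at $t=0$ we obtain $h_0(\Phi)=h_0(T_S)=h_0(C_S)$; the gain is that $C_S$ is now an autoequivalence of the small category $\mcr A$, and under the hypothesis it is, up to shift and up to replacing it by its inverse, the Coxeter functor of $\Lambda_V$, i.e.\ the normalised inverse Serre functor.

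It then remains to compute $h_0(C_S)$. For the lower bound, $\mcr A$ admits a Serre functor, so $h_0(C_S)\ge\log\rho\bigl([C_S]\bigr)$, where $[C_S]$ is the induced automorphism of $K_0(\mcr A)\otimes\mb R\cong\mb R^2$. In the basis of the two simple $\Lambda_V$‑modules the Euler form has matrix $\left(\begin{smallmatrix}1&-\dim V\\0&1\end{smallmatrix}\right)$ (up to transpose), $[C_S]$ is the associated Coxeter transformation, and a direct computation gives characteristic polynomial $\lambda^{2}-\bigl((\dim V)^{2}-2\bigr)\lambda+1$. Its spectral radius equals $1$ when $\dim V\le 2$ — the polynomial being $(\lambda+1)^2$, $\lambda^2+\lambda+1$ or $(\lambda-1)^2$ — and equals $\tfrac12\bigl((\dim V)^2-2+\sqrt{(\dim V)^4-4(\dim V)^2}\bigr)>1$ when $\dim V\ge 3$. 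For the matching upper bound I would use that $\mcr A$ is (a graded version of) the derived category of a hereditary algebra: the iterates $C_S^{\,n}$ applied to a projective generator $G$ are complexes whose total graded dimension grows exactly like $\rho([C_S])^n$ — here one uses the hypothesis once more to see that no cancellation is introduced by the internal grading, so that these iterates stay close to honest preprojective/preinjective modules — while $\delta_0(G,M)$ is bounded above by a constant times the total dimension of $M$ and $\dim_k\textup{Hom}^{\bullet}(G,M)$ computes that total dimension. Hence $h_0(C_S)\le\log\rho([C_S])$, and combining the two bounds yields the stated value. Since the spectral radius of $\Phi$ acting on $K_0(\mcr T)$ only sees $\chi(E_2,E_1)$ and not $\dim V$, the cases where these two quantities disagree are exactly the counterexamples to Kikuta–Takahashi.

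The step I expect to be the real obstacle is the recurring appeal to the hypothesis: one must show that $\mathrm{Hom}_{D^b(k)}(V,V[d])=0$ genuinely pins down the $A_\infty$‑structure of $\Lambda_V$, and hence the cotwist $C_S$, up to the behaviour of the ungraded Kronecker model, and that the only possible source of cancellation in the morphism‑space bookkeeping — the composition $\textup{Hom}^\bullet(E_1,E_2)\otimes\textup{Hom}^\bullet(E_2,E_1)\to\textup{Hom}^\bullet(E_1,E_1)$ together with its iterated analogues, whose ``extra'' contribution lands in cohomological degree $0$ precisely when $\mathrm{Hom}_{D^b(k)}(V,V[d])\neq0$ — remains under control at every stage of the iteration. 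Verifying this, rather than the final linear‑algebra computation, is where the work lies.
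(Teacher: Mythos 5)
Your initial reduction matches the paper: use \cite{Barb-Spherical-twists} to write $T_{E_2}\circ T_{E_1}$ as the twist around the spherical functor $f=-\otimes_R(E_2\oplus E_1)$ with $R=k\oplus k\oplus V$, and then use \cite{Kim-Entropy} to reduce the computation of $h_0$ to that of $h_0$ of the cotwist $C_f\simeq \mathbb{S}_R[-1-d]$, hence to $h_0(\mathbb{S}_R)$. However, the paper carries out that last computation quite differently from you: it invokes the $\dim \mathrm{Hom}^\bullet(G,\Phi^n G)$ formula of \cite[Theorem 2.6]{DHKK-Dynamical-systems} directly, and then plugs in the explicit recurrence of \cite[Lemma 8.2]{Elagin-examples-dimensions} for the graded dimensions $\dim H^\bullet\bigl((R^*)^{\otimes_R m}\bigr)$, which turns out to depend only on $\dim V$ (not on its grading) and solves in closed form via the characteristic equation $N\sigma^{-1}-\sigma^{-2}=1$. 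You instead split into a Kikuta--Shiraishi--Takahashi lower bound $h_0\ge\log\rho([C_f])$ together with a hereditary-algebra upper bound; your identification of $[C_f]$ with (a sign of) the Coxeter transformation of the $(\dim V)$-Kronecker quiver and the characteristic polynomial $\lambda^2-((\dim V)^2-2)\lambda+1$ are correct and do reproduce the number. The two approaches are genuinely different: yours buys conceptual clarity (the answer is the Coxeter eigenvalue), while the paper's buys an immediate, gapless equality without having to justify the matching upper bound.

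Two issues, though. First and most importantly, you misattribute the role of the hypothesis $\mathrm{Hom}_{D^b(k)}(V,V[d])=0$. It has nothing to do with pinning down an $A_\infty$-structure — $R=k\oplus k\oplus V$ is a formal upper-triangular algebra and the cotwist $\mathbb{S}_R$ is determined outright by the graded vector space $V$ regardless of the hypothesis (the paper's Lemma on $h_0(\mathbb{S}_A)$ makes no such assumption). The hypothesis is used, via the triangle $R\to\mathrm{RHom}(E_2\oplus E_1,E_2\oplus E_1)\to R^*[-d]$ and the vanishing $H^{1+d}(R^!)=0$, to verify that the essential image of $f^Rf$ contains a split-generator of $D(R)^c$, i.e.\ the technical hypothesis of part (1) of the entropy theorem of \cite{Kim-Entropy}; without this, $h_t(T_f)$ need not coincide with $h_t(C_f[2])$ at all, and in particular the identity $h_0(T_f)=h_0(\mathbb{S}_R)$ would be unjustified. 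Your ``real obstacle'' paragraph is therefore aimed at the wrong target. Second, the upper bound $h_0(C_f)\le\log\rho([C_f])$ is precisely what the Kikuta--Takahashi conjecture would say and is false in general, so your appeal to ``no cancellation from the internal grading'' has to be made precise: what saves you is that $\dim\mathrm{Hom}^\bullet(G,\mathbb{S}_R^n G)$ satisfies a linear recurrence (this is the content of the Elagin lemma) whose dominant root equals the Coxeter spectral radius, and this recurrence is insensitive to the grading of $V$. Without that recurrence — or an equivalent argument that the iterates are shifts of honest preprojective modules even in the graded setting — the upper bound is a genuine gap in your proposal.
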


The content of the above two theorems is summed up in \autoref{thm:entropy-composition-1} and \autoref{thm:entropy-composition-2}.

In \cite{Kikuta-Takahashi-On-cat-entropy} the authors proposed a conjecture that relates the categorical entropy of an autoequivalence with the spectral radius of the induced linear isomorphism on $K_{\mathrm{num}}(\mcr{T})$.
More precisely, if $\Phi : \mcr{T} \rightarrow \mcr{T}$ is an autoequivalence and $K_{\mathrm{num}}(\mcr{T})$ is the numerical Grothendieck group of $\mcr{T}$, then the conjecture says
\[
	h_0(\Phi) = \log \rho([\Phi]),
\]
where $[\Phi]: K_{\mathrm{num}}(\mcr{T}) \rightarrow K_{\mathrm{num}}(\mcr{T})$ is the induced map and $\rho([\Phi]) = \max \{ |\lambda| : \lambda$ eigenvalue of $[\Phi] \}$.
In \cite{KST-lower-bound} the authors proved the lower bound $\geq$, but since then counterexamples have been found, \cite{Fan-entropy-CY}, \cite{Ouchi-entropy-spherical-twist}, \cite{Mattei-categorical-entropy-surfaces}.

Using the above theorems we are able to give a numerical condition that ensures when Kikuta--Takahashi's conjecture holds for the composition of two spherical twists, see \autoref{cor:examples}.
In particular, we are able to produce the first counterexamples to Kikuta--Takahashi in odd dimension (as hypersurfaces in $\mb{P}^n \times \mb{P}^m$), see \autoref{ex:KT-PxP}, and examples in the subgroup $\langle T_{S_1}, T_{S_2} \rangle$ of $D(\Gamma_2^d)$, where $\Gamma_2^d$ is the d-Calabi-Yau Ginzburg dg algebra and $S_i$'s are the two spherical objects supported at the vertices, see \autoref{cor:ginzburg}.

The motivation of Fan's first counterexample to the Kikuta--Takahashi's conjecture was to find a mirror counterpart of Thurston's construction of a map on a surface with positive topological entropy acting trivially on homology, \cite{Fan-entropy-CY}.
The existence of a 4-dimensional example of such a map was shown recently in \cite{Kikuta-Ouchi}.
In \autoref{cor:milnor}, we give an interpretation of the $A_2$ Ginzburg dg algebra example in terms of symplectic geometry and see that certain compositions of Dehn twists give examples of such a map in even dimensions, see \autoref{rmk:torelli}.

\addtocontents{toc}{\SkipTocEntry}
\subsection*{Acknowledgments}

The authors would like to thank Kohei Kikuta and Ed Segal for reading a draft of this preprint and providing many helpful suggestions.
F.B. would like to thank his advisor Ed Segal for many helpful conversations.
F.B. was supported by the European Research Council (ERC) under the European Union Horizon 2020 research and innovation programme (grant agreement No.725010).
J.K. was supported by the Institute for Basic Science (IBS-R003-D1).

\section{Entropy of the spherical twist around a spherical functor}



Let $\mathscr{T}$ be a $k$-linear triangulated category.
In this paper, we study a {\em categorical dynamical system}, {\it i.e.} a triangulated category together with an endofunctor.
To study the complexity of a categorical dynamical system, \cite{DHKK-Dynamical-systems} introduced the notion of {\em categorical entropy}.

\begin{dfn}
	For $E,F \in \mathscr{T}$, the {\em categorical complexity} of $F$ with respect to $E$ is the function $\delta_t(E,F) : \mathbb{R} \to [0,\infty]$ given by
	\[
		\delta_t(E,F) = \inf
		\left\{\sum_{i=1}^k e^{n_i t} \,\left|\,
		\begin{tikzcd}[column sep=tiny]
			0 \ar[rr] & & * \ar[dl] \ar[rr] & & * \ar[dl] & \cdots & * \ar[rr] & & F \oplus F' \ar[dl]\\
			& E[n_1] \ar[ul,"+1"] & & E[n_2] \ar[ul,"+1"] & & & & E[n_k] \ar[ul,"+1"] &
		\end{tikzcd}
		\right.\right\}
	\]
	if $F \not\cong 0$, and $\delta_t(E,F) = 0$ if $F \cong 0$.
	Here the infimum is taken over all possible cone decompositions of objects of the form $F \oplus F'$ into $E[n_i]$'s.
\end{dfn}

An object $G \in \mathscr{T}$ is called a {\em split-generator} if the smallest full triangulated subcategory containing $G$ and closed under taking direct summands coincides with $\mathscr{T}$ itself.

\begin{dfn}
	Let $G$ be a split-generator of $\mathscr{T}$.
	The {\em categorical entropy} of an exact endofunctor $\Phi : \mathscr{T} \to \mathscr{T}$ is the function $h_t(\Phi) : \mathbb{R} \to [-\infty,\infty)$ given by
	\[
		h_t(\Phi) = \lim_{n \to \infty} \frac{1}{n} \log \delta_t(G,\Phi^n(G)).
	\]
\end{dfn}

\begin{rmk}
	The categorical entropy is well-defined, {\it i.e.} the limit exists in $[-\infty,\infty)$ and does not depend on the choice of a split-generator \cite{DHKK-Dynamical-systems}.
	Moreover it can be also written as
	\[
		h_t(\Phi) = \lim_{n \to \infty} \frac{1}{n} \log \delta_t(G,\Phi^n(G'))
	\]
	for any choice of split-generators $G,G'$ of $\mathscr{T}$, see \cite{Kikuta-Curve}.
\end{rmk}

Let $\mathscr{D},\mathscr{T}$ be $k$-linear triangulated categories with dg enhancements.

\begin{dfn}
	An exact functor $f : \mathscr{D} \to \mathscr{T}$ with right and left adjoint functors $f^R,f^L$ is called a {\em spherical functor} if it satisfies the following conditions:
	\begin{itemize}
		\item[(1)] The {\em twist functor} $T_f = \mathrm{cone}(ff^R \overset{\varepsilon}{\to} \mathrm{Id}_\mathscr{T})$ is an exact autoequivalence of $\mathscr{T}$, where $\varepsilon : ff^R \to \mathrm{Id}_\mathscr{T}$ is the counit of the adjoint pair $f \dashv f^R$.
		\item[(2)] The {\em cotwist functor} $C_f = \mathrm{cone}(\mathrm{Id}_\mathscr{D} \overset{\eta}{\to} f^Rf)[-1]$ is an exact autoequivalence of $\mathscr{D}$, where $\eta : \mathrm{Id}_\mathscr{D} \to f^Rf$ is the unit of the adjoint pair $f \dashv f^R$.
		\item[(3)] $f^R \cong f^LT_f[-1]$.
		\item[(4)] $f^R \cong C_ff^L[1]$.
	\end{itemize}
\end{dfn}

In {\cite[Theorem 1.6, 1.7]{Kim-Entropy}}, the second author proved the following theorem which relates the entropy of the twist with that of the cotwist.

\begin{thm}
	\label{thm:entropy-twist-spherical-functor}
	Let $f : \mathscr{D} \to \mathscr{T}$ be a spherical functor with right adjoint functor $f^R$.
	\begin{itemize}
		\item[(1)] Assume that the essential image of $f^R$ contains a split-generator of $\mathscr{D}$.
		Then
			\[
			h_t(C_f[2])	\leq h_t(T_f) \leq
			\begin{cases}
			0 & \text{for every } t \text{ such that } h_t(C_f[2]) \leq 0,\\
			h_t(C_f[2]) & \text{for every } t \text{ such that } h_t(C_f[2]) \geq 0.
			\end{cases}
			\]
		\item[(2)] Assume that $\mathrm{Ker}\, ff^R \neq 0$.
		Then
			\[
			h_t(T_f) \geq 0.
			\]
	\end{itemize}
\end{thm}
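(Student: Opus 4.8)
The plan is to derive the statement from the standard intertwining relations for a spherical functor together with three elementary properties of the complexity: sub-multiplicativity $\delta_t(E,F)\le\delta_t(E,G')\delta_t(G',F)$, the fact that applying an exact functor does not increase $\delta_t$, and the fact that an autoequivalence preserves $\delta_t$. First I would record the functorial isomorphism $T_f\circ f\cong f\circ(C_f[2])$: the zig-zag identity $(\varepsilon f)\circ(f\eta)=\mathrm{id}_f$ exhibits $f\eta$ as a section of $\varepsilon f$, so $ff^Rf\cong f\oplus fC_f[1]$ and the cone of $\varepsilon f$ is $fC_f[2]$. Iterating gives $T_f^n\circ f\cong f\circ(C_f[2])^n$, and passing to right adjoints (using that $T_f$ and $C_f$ are autoequivalences) gives $f^R\circ T_f^n\cong(C_f[2])^n\circ f^R$ for all $n\ge0$. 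The one subtle point here is the bookkeeping of shifts that produces $[2]$ rather than $[1]$.

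For the lower bound in (1), let $G$ be a split-generator of $\mathscr{T}$ and $G_\mathscr{D}$ one of $\mathscr{D}$. The hypothesis forces $f^R G$ to be a split-generator of $\mathscr{D}$: writing a split-generator $G_0$ of $\mathscr{D}$ as $f^R H$ with $H\in\mathrm{thick}(G)=\mathscr{T}$, exactness of $f^R$ gives $G_0\in\mathrm{thick}(f^R G)$, hence $\mathrm{thick}(f^R G)=\mathscr{D}$. Applying $f^R$ to cone decompositions of $T_f^n G$ and using $f^R T_f^n\cong(C_f[2])^n f^R$ yields $\delta_t(f^R G,(C_f[2])^n f^R G)\le\delta_t(G,T_f^n G)$; dividing by $n$, taking $\log$ and letting $n\to\infty$ gives $h_t(C_f[2])\le h_t(T_f)$.

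The upper bound in (1) is where I expect the main work to lie. From the triangle $ff^R x\to x\to T_f x\to(ff^R x)[1]$ one gets $\delta_t(G,T_f^n G)\le\delta_t(G,T_f^{n-1}G)+e^t\delta_t(G,ff^R T_f^{n-1}G)$. Rewriting $ff^R T_f^{n-1}G\cong f((C_f[2])^{n-1}f^R G)$ and applying sub-multiplicativity and the non-increase of $\delta_t$ under $f$ and under $C_f[2]$, one bounds the last term by $C\cdot\delta_t(G_\mathscr{D},(C_f[2])^{n-1}G_\mathscr{D})$ with $C:=\delta_t(G,fG_\mathscr{D})\cdot\delta_t(G_\mathscr{D},f^R G)$ finite and independent of $n$. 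Summing the recursion gives $\delta_t(G,T_f^n G)\le\delta_t(G,G)+e^tC\sum_{m=0}^{n-1}\delta_t(G_\mathscr{D},(C_f[2])^m G_\mathscr{D})$, and a case analysis on $h:=h_t(C_f[2])$ finishes: if $h\ge0$ then for each $\varepsilon>0$ the partial sums are $O(e^{n(h+\varepsilon)})$, so $h_t(T_f)\le h+\varepsilon$ and hence $h_t(T_f)\le h$; if $h\le0$ (including $h=-\infty$) the series $\sum_m\delta_t(G_\mathscr{D},(C_f[2])^m G_\mathscr{D})$ converges, the right-hand side is bounded, and $h_t(T_f)\le0$. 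The delicate points are ensuring $C$ is genuinely independent of $n$ and the borderline case $h=0$ (handled by letting $\varepsilon\to0$).

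For part (2), pick $x\not\cong0$ with $ff^R x\cong0$; the defining triangle gives $T_f x\cong x$, hence $T_f^n x\cong x$ for all $n$. Since $T_f^n$ is an autoequivalence, $\delta_t(G,x)=\delta_t(G,T_f^n x)\le\delta_t(G,T_f^n G)\delta_t(T_f^n G,T_f^n x)=\delta_t(G,T_f^n G)\delta_t(G,x)$; as $x\not\cong0$ and $G$ split-generates, $\delta_t(G,x)$ is finite and strictly positive, so $\delta_t(G,T_f^n G)\ge1$ for every $n$ and therefore $h_t(T_f)\ge0$.
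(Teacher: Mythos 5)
This theorem is quoted from Kim-Entropy [Theorems 1.6, 1.7]; the present paper contains no proof of it, so there is no in-paper argument to compare against. Your proposal is correct and is, as far as I can tell, the approach one would expect in the original reference: the intertwining isomorphisms $T_f f \cong f C_f[2]$ and $f^R T_f^n \cong (C_f[2])^n f^R$, together with sub-multiplicativity and functoriality of $\delta_t$, give both bounds. The shift by $[2]$ is right: the zigzag identity exhibits $f\eta$ as a section of $\varepsilon f$, so $ff^Rf \cong f \oplus fC_f[1]$, and the cone of a retraction $A \oplus B \to A$ (with $A \to A\oplus B$ a section) is $B[1]$, so $T_f f = \mathrm{cone}(\varepsilon f) \cong fC_f[2]$. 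For the lower bound you correctly invoke hypothesis (1) to make $f^R G$ a split-generator of $\mathscr{D}$, so that $\delta_t(f^R G, (C_f[2])^n f^R G)$ actually computes $h_t(C_f[2])$; and the upper bound reduces cleanly to a geometric-series estimate on $\sum_m \delta_t(G_{\mathscr{D}}, (C_f[2])^m G_{\mathscr{D}})$, with the case split on the sign of $h_t(C_f[2])$ and the borderline $h_t(C_f[2]) = 0$ handled by letting $\varepsilon \to 0$. Part (2) is the expected fixed-point argument.

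One point worth making explicit: in part (2) you use $\delta_t(G,x) > 0$ for $x \not\cong 0$ to cancel $\delta_t(G,x)$ from both sides and conclude $\delta_t(G,T_f^n G) \geq 1$. This is not a formal consequence of the definition of $\delta_t$ (the infimum over cone decompositions could a priori be $0$ when $t \neq 0$); it is a standing assumption in the literature, imposed explicitly in \cite{DHKK-Dynamical-systems} and known to hold when $\mathscr{T} = D(A)^c$ for $A$ smooth and proper. You should either cite that or add it to the hypotheses.
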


\begin{ex}
	This theorem can be considered as a generalization of the computations of the entropy of the spherical twist \cite{Ouchi-entropy-spherical-twist} and the $\mathbb{P}$-twist \cite{Fan-P-twists}.
	
	In fact, if $E$ is a $d$-spherical object ($d \geq 1$) in $\mathscr{T}$, the functor $f = - \otimes_k E : D^b(k) \to \mathscr{T}$ is spherical and
	\[
	T_f \cong T_E, \quad \quad C_f \cong [-1-d]
	\]
	where $T_E$ denotes the {\em spherical twist} around $E$.
	Since in $D^b(k)$, by \cite[Theorem 2.6]{DHKK-Dynamical-systems}, we have $h_t([m]) = mt$ for any $m \in \mathbb{Z}$ , \autoref{thm:entropy-twist-spherical-functor} implies that
		\[
		(1-d)t	\leq h_t(T_E) \leq
		\begin{cases}
		0 & t \geq 0,\\
		(1-d)t & t \leq 0.
		\end{cases}
		\]
	Moreover, it also implies that if $E^\perp := \{ F \in \mathscr{T} \,|\, \mathrm{Hom}_\mathscr{T}^\bullet(E,F) = 0 \} \neq 0$, then $h_t(T_E) = 0$ for all $t \geq 0$.
	This is exactly the main result of \cite{Ouchi-entropy-spherical-twist}.
	
	The main result of \cite{Fan-P-twists} can be obtained similarly using a presentation of a $\mathbb{P}$-object as a spherical functor \cite{Seg-autoeq-spherical-twist}.
\end{ex}

In general, it is not easy to verify the technical conditions of the above theorem.
However, the following lemma from \cite{Kim-Entropy} provides a useful sufficient condition for the condition of part (1) of \autoref{thm:entropy-twist-spherical-functor}.

\begin{lem}
	\label{lem:tech-condition-dt-triangle}
	Let $f : \mathscr{D} \to \mathscr{T}$ be a spherical functor with right adjoint functor $f^R$ and $G$ be a split-generator of $\mathscr{D}$.
	Assume that there is an integer $n > 0$ such that $\mathrm{Hom}_\mathscr{D}(C_f^n(G),G) = 0$.
	Then $f^Rf(G \oplus C_f(G) \oplus \cdots \oplus C_f^{n-1}(G))$ is a split-generator of $\mathscr{D}$.
\end{lem}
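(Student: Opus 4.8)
The plan is to show that $M := G \oplus C_f(G) \oplus \cdots \oplus C_f^{n-1}(G)$ split-generates $\mathscr{D}$ after applying $f^R f$, by exploiting the triangle $\mathrm{Id}_\mathscr{D} \to f^R f \to C_f[1] \xrightarrow{+1}$ that comes from the definition of the cotwist. First I would note that $f^R f(M)$ lies in the smallest thick subcategory generated by $M$ and $f^R f(G)$; so it suffices to show that $M$ together with $C_f(M), C_f^2(M), \ldots$ — equivalently, all the objects $C_f^j(G)$ for $j \geq 0$ — generate $\mathscr{D}$, and then to "close the loop'' at $j = n$ using the vanishing hypothesis. Concretely, applying the cotwist triangle with the object $C_f^{j}(G)$ in place of an arbitrary object gives a triangle
\[
	C_f^{j}(G) \longrightarrow f^R f\bigl(C_f^{j}(G)\bigr) \longrightarrow C_f^{j+1}(G)[1] \xrightarrow{+1},
\]
so inductively $f^R f(C_f^{j}(G)) \in \langle C_f^{j}(G), C_f^{j+1}(G)\rangle_{\mathrm{thick}}$, and hence $f^R f(M) \in \langle G, C_f(G), \ldots, C_f^{n}(G)\rangle_{\mathrm{thick}}$.

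The key remaining step is to run the implication in the other direction: I want to recover $C_f^{n}(G)$ — and therefore all of $G, C_f(G), \ldots, C_f^{n-1}(G)$, which already suffices since $G$ is a split-generator — from $f^R f(M)$. Here is where $\mathrm{Hom}_\mathscr{D}(C_f^n(G), G) = 0$ enters. Since $C_f$ is an autoequivalence, this vanishing is equivalent to $\mathrm{Hom}_\mathscr{D}(C_f^{n-1}(G), C_f^{-1}(G)) = 0$, and more usefully to $\mathrm{Hom}_\mathscr{D}(C_f^{j+1}(G), C_f^{j-(n-1)}(G)) = 0$ for all $j$; but the cleanest use is the following. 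Apply $f^R f$ to $M$ and stare at the distinguished triangles above for $j = 0, \ldots, n-1$: they exhibit $f^R f(M)$ as built from the "staircase'' of the $C_f^j(G)$. I would argue that the connecting maps in this staircase are governed by the unit $\eta$, and that the hypothesis $\mathrm{Hom}_\mathscr{D}(C_f^n(G), G) = 0$ forces the relevant extension to split off $C_f^n(G)$ as a direct summand of (a shift of) $f^R f(M)$, after which $C_f^n(G) \in \langle f^R f(M)\rangle_{\mathrm{thick}}$ and we are done because $C_f^n(G)$ generates $\langle G\rangle_{\mathrm{thick}} = \mathscr{D}$.

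The main obstacle I anticipate is precisely this splitting argument: showing that the iterated cone $f^R f(M)$ actually contains $C_f^n(G)$ as a summand, rather than merely sitting in a thick subcategory that involves both $C_f^n(G)$ and the lower terms in an entangled way. The honest way to handle this is to work in the dg enhancement and build an explicit twisted complex (a Postnikov-type tower) for $f^R f(M)$ whose associated graded pieces are $G[0], C_f(G)[1], \ldots, C_f^{n}(G)[n]$ in appropriate degrees, and then observe that the only differential that could connect the top piece $C_f^n(G)[n]$ to the bottom piece $G[0]$ lives in $\mathrm{Hom}_\mathscr{D}(C_f^n(G), G[\,\text{something}\,])$, which vanishes by hypothesis (one should check the degree bookkeeping makes the relevant $\mathrm{Hom}$ exactly $\mathrm{Hom}_\mathscr{D}(C_f^n(G),G)$, possibly up to a harmless shift that is also covered since $C_f^n(G)$ and $G$ are related by the autoequivalence $C_f$). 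Once the top summand is detached, a downward induction peels off $C_f^{n-1}(G), \ldots, C_f(G), G$, and the lemma follows. A secondary, more bookkeeping-flavored point is to make sure the adjunction $f \dashv f^R$, the unit $\eta$, and the identification of $C_f$ via $\mathrm{cone}(\eta)[-1]$ are used consistently when writing down the triangles; this is routine but must be done carefully so the degrees in the final $\mathrm{Hom}$ come out right.
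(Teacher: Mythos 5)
Your proposal has the right ingredients — the cotwist triangles $C_f^{j}(G) \to f^Rf(C_f^j(G)) \to C_f^{j+1}(G)[1]$, the vanishing hypothesis, and the recognition that the crux is a splitting — and you correctly flag the splitting as the main obstacle. But the way you propose to carry it out has a genuine gap. You want to build a Postnikov tower for $f^Rf(M)$ with associated graded pieces $G, C_f(G)[1], \ldots, C_f^n(G)[n]$ and split off the top piece. Two problems: first, $f^Rf(M)$ is a direct sum of the $n$ two-term objects $f^Rf(C_f^j(G))$, so any filtration of it has the $2n$ pieces $C_f^j(G), C_f^{j+1}(G)[1]$ ($j=0,\dots,n-1$) with no gluing between summands — it is not a single $(n+1)$-step tower (you can see the mismatch already in $K_0$: $[f^Rf(M)] = [G]+[C_f^n(G)[1]]$ by telescoping, not $\sum_j (-1)^j[C_f^j(G)]$). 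Second, and more seriously, even if you had an $(n+1)$-step tower with pieces $C_f^j(G)[j]$, splitting off the top piece $C_f^n(G)[n]$ requires \emph{all} the attaching maps out of it to vanish, including those into the interior pieces $C_f^j(G)$ for $0<j<n$; the hypothesis $\mathrm{Hom}_{\mathscr{D}}(C_f^n(G),G)=0$ controls only the top-to-bottom map, so the splitting you describe does not follow.

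The paper's proof avoids both issues by not trying to split $f^Rf(M)$ or a long Postnikov tower of it. Instead it inductively constructs, via the octahedral axiom, an auxiliary object $X_n \in \langle f^Rf(M)\rangle_{\mathrm{thick}}$ (cone of a map $X_{n-1}[-1]\to f^Rf(C_f^{n-1}(G))$, starting from $X_1 = f^Rf(G)$) that fits into the single distinguished triangle $G \to X_n \to C_f^n(G)[1]\to G[1]$: the octahedral construction is precisely what makes the intermediate $C_f^j(G)$'s cancel, so only two terms survive. The connecting map of that triangle lives in $\mathrm{Hom}(C_f^n(G)[1],G[1])\cong \mathrm{Hom}(C_f^n(G),G)=0$, so the triangle splits, $G$ is a direct summand of $X_n$, and since $X_n\in\langle f^Rf(M)\rangle_{\mathrm{thick}}$ the lemma follows in one step — no downward induction needed (though your downward-induction idea, running the cotwist triangles backwards from $C_f^n(G)$ to $G$ inside the thick subcategory, is itself sound and would also finish if you could first place $C_f^n(G)$ in $\langle f^Rf(M)\rangle_{\mathrm{thick}}$). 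The missing idea in your attempt is exactly this octahedral collapse, which reduces the splitting problem to a single $\mathrm{Hom}$-group that the hypothesis controls.
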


\begin{proof}
	Set $X_1 = f^Rf(G)$.
	It sits in the exact triangle
	\begin{equation}
		\label{dt-cotwist}
		\begin{tikzcd}
		G \ar[r,"\eta_G"] & f^Rf(G) \ar[r,"\phi_1"] & C_f(G)[1] \ar[r] & G[1]
		\end{tikzcd}
	\end{equation}
	defining the cotwist functor $C_f$.
	Then, we inductively define a sequence $\{X_n\}_{n=1}^\infty$ of objects of $\mathscr{D}$ by the commutative diagram
	\[
		\begin{tikzcd}
			X_{n-1}[-1] \ar[r,"\phi_{n-1}{[-1]}"] \ar[d,equal] & C_f^{n-1}(G) \ar[d,"\eta_{C_f^{n-1}(G)}"] \ar[r] & G \ar[r] \ar[d] & X_{n-1} \ar[d,equal]\\
			X_{n-1}[-1] \ar[r] & f^RfC_f^{n-1}(G) \ar[r] \ar[d] & X_n \ar[r] \ar[d,"\phi_n"] & X_{n-1}\\
			& C_f^n(G)[1] \ar[r,equal] \ar[d] & C_f^n(G)[1] \ar[d] &\\
			& C_f^{n-1}(G)[1] \ar[r] & G[1]. &
		\end{tikzcd}
	\]
	obtained by applying the octahedral axiom.

	The assumption implies that the exact triangle
	\[
		\begin{tikzcd}
		G \ar[r] & X_n \ar[r] & C_f^n(G)[1] \ar[r] & G[1]
		\end{tikzcd}
	\]
	splits, and therefore $X_n \cong G \oplus C_f^n(G)[1]$.
	The lemma follows since $X_n$ is split-generated by $f^Rf(G \oplus C_f(G) \oplus \cdots \oplus C_f^{n-1}(G))$ by construction.
\end{proof}

\begin{cor}
	Let $f : \mathscr{D} \to \mathscr{T}$ be a spherical functor with right adjoint functor $f^R$.
	If there exists a split-generator $G$ of $\mathscr{D}$ and an integer $n > 0$ such that $\mathrm{Hom}_\mathscr{D}(C_f^n(G),G) = 0$, then the essential image of $f^Rf$ contains a split-generator of $\mathscr{D}$.
\end{cor}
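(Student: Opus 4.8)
The plan is to read this off \autoref{lem:tech-condition-dt-triangle} with essentially no extra work. Given the hypothesised split-generator $G$ of $\mathscr{D}$ and the integer $n>0$ with $\mathrm{Hom}_\mathscr{D}(C_f^n(G),G)=0$, I would set
\[
    G' := f^Rf\bigl(G \oplus C_f(G) \oplus \cdots \oplus C_f^{n-1}(G)\bigr).
\]
The lemma asserts precisely that $G'$ is a split-generator of $\mathscr{D}$, and $G'$ manifestly lies in the essential image of $f^Rf$; that is already the statement.

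The only point worth making explicit is why the input $G \oplus C_f(G) \oplus \cdots \oplus C_f^{n-1}(G)$ is legitimate: an object having a split-generator (here $G$) among its direct summands is itself a split-generator, so $G \oplus C_f(G) \oplus \cdots \oplus C_f^{n-1}(G)$ is one, and since $f^Rf$ is exact it commutes with finite direct sums, so $G'$ is an honest object of $\mathscr{D}$ and a genuine value of $f^Rf$. One does not even need here that $C_f$ is an autoequivalence — that input is consumed inside the proof of the lemma.

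I do not anticipate any obstacle: the corollary is a repackaging of \autoref{lem:tech-condition-dt-triangle} whose purpose is to replace the abstract hypothesis of part (1) of \autoref{thm:entropy-twist-spherical-functor} (\say{the essential image of $f^R$ contains a split-generator of $\mathscr{D}$}) by the concrete, checkable condition \say{$\mathrm{Hom}_\mathscr{D}(C_f^n(G),G)=0$ for some split-generator $G$ and some $n>0$}; note that since the essential image of $f^Rf$ is contained in that of $f^R$, the conclusion of the corollary is formally stronger than, and hence implies, that hypothesis. If desired one could append a short remark tracing how $G'$ then feeds into \autoref{thm:entropy-twist-spherical-functor}, but no new argument is involved.
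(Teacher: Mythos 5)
Your proof is correct and is exactly the paper's intended reading: the corollary is stated without proof immediately after \autoref{lem:tech-condition-dt-triangle} precisely because, as you say, the object $G' = f^Rf(G \oplus C_f(G) \oplus \cdots \oplus C_f^{n-1}(G))$ which the lemma declares to be a split-generator manifestly lies in the essential image of $f^Rf$. One small remark: the middle paragraph of your proposal addresses a non-issue — the lemma places no requirement that the input $G \oplus C_f(G) \oplus \cdots \oplus C_f^{n-1}(G)$ be a split-generator, nor do you need $f^Rf$ to commute with direct sums; $G'$ is by definition the value of $f^Rf$ on a single object, which is all that is required.
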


In general, computing the entropy of an endofunctor is a very hard task, and we will try to tackle this question using \autoref{thm:entropy-twist-spherical-functor}.
However, there is a case in which we can bound the entropy using its value at zero and some asymptotic behaviour.

\begin{prop}[{\cite[Theorem 2.1.7]{Shifting-numbers}, \cite[Proposition 6.13, 6.14]{Elagin-Lunts-dimension}}]
	\label{prop:bound-entropies}
	For any non-nilpotent endofunctor $F$ of $\mcr{T} = D(T)^c$, $T$ a smooth and compact dg algebra, the limits
	\[
		\lim_{t \to \pm \infty} \frac{h_t(F)}{t} = \tau^{\pm}(F)
	\]
	are finite and we have the inequalities
	\[
		\renewcommand{\arraystretch}{1.5}
		\begin{array}{lcr}
			\tau^{+}(F) t \leq h_t(F) \leq h_0(F) + \tau^{+}(F)t & & t \geq 0,\\
			\tau^{-}(F) t \leq h_t(F) \leq h_0(F) + \tau^{-}(F)t & & t \leq 0.
		\end{array}
	\]
\end{prop}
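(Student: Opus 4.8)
The plan is to sandwich $h_t(F)$ between the two \emph{cohomological shifting numbers} of $F$, which will also turn out to be the slopes $\lim_{t\to\pm\infty}h_t(F)/t$. Fix a split-generator $G$ of $\mcr{T}=D(T)^c$ and let $[e^-,e^+]$ be the range of cohomological degrees occupied by $\mathrm{Hom}^\bullet_{\mcr{T}}(G,G)$. For $0\neq X\in\mcr{T}$ the graded space $\mathrm{Hom}^\bullet_{\mcr{T}}(G,X)$ is finite-dimensional and nonzero by properness, so it lives in a well-defined interval $[a^-(X),a^+(X)]$ of degrees. Since $F$ is non-nilpotent and $G$ split-generates, $F^nG\neq 0$ for all $n$, so I may set $a^{\pm}_n:=a^{\pm}(F^nG)$ and \emph{define} $\tau^+:=-\lim_n a^-_n/n$ and $\tau^-:=-\lim_n a^+_n/n$. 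The proof then splits into three parts: (a) these limits exist and are finite; (b) the two displayed pairs of inequalities hold with these $\tau^{\pm}$; (c) hence $\lim_{t\to\pm\infty}h_t(F)/t=\tau^{\pm}$.

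The one genuine input is a regularity statement for $D(T)^c$: since $T$ is smooth and compact, $D(T)^c$ has finite Rouquier dimension, and I would use it in the form that \emph{there is a constant $c_0=c_0(T,G)$ such that every $0\neq X$ admits a cone decomposition into at most $c_0\,\delta_0(G,X)$ shifts $G[p_k]$, all of whose shifts lie in the window $[e^+-a^+(X)-c_0,\, e^--a^-(X)+c_0]$} — morally, one need not shift $G$ higher than is forced by the bottom of $\mathrm{Hom}^\bullet(G,X)$, nor lower than is forced by its top. Granting this, part (a) is bookkeeping: resolving $F^{n-1}G$ this way and applying $F$ (resp. $F^m$) shows $a^{\pm}_n$ grows at most linearly and yields the near-subadditivities $a^-_{n+m}\geq a^-_n+a^-_m-c$ and $a^+_{n+m}\leq a^+_n+a^+_m+c$, so Fekete's lemma produces finite limits.

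For the lower bounds, take any cone decomposition of $F^nG\oplus P$ into shifts $G[n_i]$. Its spectral sequence has $E_1$-page $\bigoplus_i\mathrm{Hom}^\bullet_{\mcr{T}}(G,G[n_i])$, supported in degrees $\bigcup_i([e^-,e^+]-n_i)\subseteq[e^- - \max_i n_i,\, e^+ - \min_i n_i]$, and the abutment $\mathrm{Hom}^\bullet_{\mcr{T}}(G,F^nG\oplus P)$ is supported inside that range; as $F^nG$ is a summand, this forces $\max_i n_i\geq e^- - a^-_n$ and $\min_i n_i\leq e^+ - a^+_n$. Keeping only the dominant exponential in $\sum_i e^{n_it}$ gives $\delta_t(G,F^nG)\geq e^{(e^- - a^-_n)t}$ for $t\geq 0$ and $\delta_t(G,F^nG)\geq e^{(e^+ - a^+_n)t}$ for $t\leq 0$; dividing by $n$ and letting $n\to\infty$ yields $h_t(F)\geq\tau^+ t$ for $t\geq 0$ and $h_t(F)\geq\tau^- t$ for $t\leq 0$ (so in particular $h_t(F)$ is finite and $h_0(F)\geq 0$).

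For the upper bounds I would repeat the estimate on the efficient, shift-confined decompositions supplied by the regularity input: with such a decomposition the dominant exponential in $\sum_i e^{n_it}$ is governed by the relevant edge of the window, so $\delta_t(G,F^nG)\leq c_0\,\delta_0(G,F^nG)\,e^{(e^- - a^-_n+c_0)t}$ for $t\geq 0$ and $\delta_t(G,F^nG)\leq c_0\,\delta_0(G,F^nG)\,e^{(e^+ - a^+_n - c_0)t}$ for $t\leq 0$; applying $\tfrac1n\log$ and $n\to\infty$ absorbs the constant and gives $h_t(F)\leq h_0(F)+\tau^+ t$ for $t\geq 0$ and $h_t(F)\leq h_0(F)+\tau^- t$ for $t\leq 0$. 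Combining the two steps, dividing by $t$, and letting $t\to\pm\infty$ squeezes $h_t(F)/t$ to $\tau^{\pm}$, establishing (c) and hence the whole statement. The genuine obstacle is the regularity input: one needs cone decompositions that are \emph{simultaneously} close to $\delta_0$-optimal \emph{and} have all shifts confined to an $O(1)$-window around $[-a^+(X),-a^-(X)]$ — a Castelnuovo--Mumford-type bound whose proof is exactly where smoothness and compactness of $T$ enter. If instead one is content to cite that $h_t$ is convex, the two \emph{upper} inequalities become formal consequences of the mere existence of the slopes $\lim_{t\to\pm\infty}h_t/t$, and only the lower bounds together with the identification of $\tau^{\pm}$ as the shifting numbers would remain.
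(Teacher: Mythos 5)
The paper does not actually prove this proposition --- it is imported wholesale, with the existence and finiteness of $\tau^{\pm}$ attributed to {\cite[Theorem 2.1.7]{Shifting-numbers}} and the sandwich inequalities to {\cite[Propositions 6.13, 6.14]{Elagin-Lunts-dimension}} --- so there is no in-paper argument to compare against, and I can only judge your sketch on its own merits. Your lower-bound argument is correct and is the standard one: for any cone decomposition of $F^nG\oplus P$ into the $G[n_i]$, the associated spectral sequence has $E_1$-page $\bigoplus_i\mathrm{Hom}^\bullet_{\mcr{T}}(G,G[n_i])$, supported in $[e^- - \max_i n_i,\, e^+ - \min_i n_i]$; since $\mathrm{Hom}^\bullet_{\mcr{T}}(G,F^nG)$ is a summand of the abutment this forces $\max_i n_i\geq e^- - a^-_n$ and $\min_i n_i\leq e^+ - a^+_n$, and keeping the dominant exponential gives $\delta_t(G,F^nG)\geq e^{(e^- - a^-_n)t}$ for $t\geq 0$, whence $h_t(F)\geq\tau^+ t$, and symmetrically for $t\leq 0$. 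That much is sound.

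The genuine gap is everywhere else, and it is exactly the point you flag. Your ``regularity input'' --- decompositions that are simultaneously $\delta_0$-efficient and confined to an $O(1)$-thickening of the cohomological window of the object --- is precisely the nontrivial content that smoothness and compactness of $T$ are supposed to deliver (in essence, finite Rouquier dimension plus a Castelnuovo--Mumford-type control on generation time), and you assume it rather than prove it. Worse, the gap propagates: the Fekete near-subadditivity you invoke for the existence of $\lim_n a^{\pm}_n/n$ also relies on that same input, so part (a) is not independent of the unproven step. The fallback you offer --- deducing the two upper inequalities from convexity of $h_t$ --- is correct as far as it goes, but convexity only converts ``the slopes exist and are finite'' into the upper bounds; it does not produce finiteness, and your lower bound only controls $h_t/t$ from below. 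To close the loop one must first secure a linear upper bound of the form $h_t(F)\leq h_0(F)+C|t|$, which is again the regularity input. So the architecture of your sketch matches the cited sources and the lower bound is genuinely established, but finiteness of $\tau^{\pm}$ and both upper inequalities rest on an unjustified claim.
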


\section{Upper triangular dg algebras and gluing}



Let us consider two dg algebras $A$, $B$ and an $A-B$ bimodule $V$.
From this data we can construct a new dg algebra $R := B \oplus A \oplus V$, where the grading and the differential are defined componentwise, and the multiplication is $(b,a,v) \cdot (b', a', v') = (bb', aa', vb' + av')$.

This new dg algebra is sometimes denoted
\[
	R = \left( \begin{array}{cc} A & V\\ 0 & B\end{array}\right),
\]
and is called an upper triangular dg algebra.

In \cite{Barb-Spherical-twists} the first author used such a dg algebra to represent the composition of the spherical twists around two spherical objects as the spherical twist around a single spherical functor.

Let us briefly recall this construction.
Consider $\mcr{T}$ a $k$-linear, proper, dg enhanced triangulated category with a split-generator and a Serre functor $\mb{S}_{\mcr{T}}$.
Take $E_1$ and $E_2$ two $d$-spherical objects in $\mcr{T}$, {\it i.e.} they satisfy
\[
	\mb{S}_{\mcr{T}} E_i \simeq E_i[d], \quad \quad \text{Hom}^{\bullet}_{\mcr{T}}(E_i, E_i) := \bigoplus_{n \in \mb{Z}} \text{Hom}_{\mcr{T}}(E_i, E_i[n]) \simeq k[t] \left/ t^2 \right., \; \deg{t} = d,
\]
where the second isomorphism is of graded algebras.

Then, we can construct the autoequivalence
\[
	T_i(F) := \text{cone} \left( \text{Hom}_{\mcr{T}}^{\bullet}(E_i, F) \otimes E_i \rightarrow F \right)
\]
called the {\it spherical twist} around $E_i$, see \cite{Seidel-Thomas01}.

By using a dg enhancement of $\mcr{T}$ we can fix a dg $k$-module $W$ whose associated graded module $H^{\bullet}(W)$ is isomorphic to $V := \text{Hom}^{\bullet}_{\mcr{T}}(E_2,E_1)$.
Then, we can define the upper triangular dg algebra $R = k \oplus k \oplus W$ and consider $E_2 \oplus E_1$ as a left dg module over $R$.
Here the two copies of $k$ act on the left and on the right on $W$ via the identity of (the dg lift of) $E_1$ and $E_2$, respectively.
Notice however that such an upper triangular dg algebra is formal because we can write an explicit quasi isomorphism $H^{\bullet}(R) \rightarrow R$ by choosing representatives of the cohomology classes of $W$.
In particular, the dg enhancement of $\mcr{T}$ doesn't matter in this particular construction, and we directly consider the graded algebra $R := k \oplus k \oplus V$.

With these remarks in mind, \cite[Theorem 3.2.1]{Barb-Spherical-twists} can be stated as follows

\begin{thm}
	\label{thm:composition-twists}
	The left $R$ module $E_2 \oplus E_1$ defines a spherical functor
	\[
		\begin{tikzcd}[column sep = 7em]
			D(R)^c \ar[r, "f := - \stackrel{L}{\otimes}_R (E_2 \oplus E_1)"] & \mcr{T}
		\end{tikzcd}
	\]
	whose twist is given by $T_f \simeq T_{2} \circ T_1$ and whose cotwist is given by $C_f \simeq - \stackrel{L}{\otimes}_{R} R^{\ast}[-1-d]$.
\end{thm}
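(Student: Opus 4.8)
The plan is to verify that the left $R$-module $E_2 \oplus E_1$ indeed satisfies the hypotheses of the general construction from \cite{Barb-Spherical-twists} and then to identify the twist and cotwist explicitly. Since the statement is quoted as \cite[Theorem 3.2.1]{Barb-Spherical-twists} (modulo the formality remark that lets us replace the upper triangular dg algebra by the graded algebra $R = k \oplus k \oplus V$), the real work is bookkeeping: confirming that $f = - \stackrel{L}{\otimes}_R (E_2 \oplus E_1)$ is spherical and computing $T_f$ and $C_f$. First I would recall the general gluing picture: $D(R)^c$ is glued from $D(k)^c$ and $D(k)^c$ along the bimodule $V$, so there are semiorthogonal-type recollement data, and the module $E_2 \oplus E_1$ restricts on the two "vertices" to $E_2$ and $E_1$ respectively. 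The functor $f$ is then built from the two spherical functors $- \otimes_k E_i : D^b(k) \to \mcr{T}$, whose twists are $T_{E_i}$ and whose cotwists are $[-1-d]$, glued along the natural transformation encoding $V = \mathrm{Hom}^\bullet(E_2, E_1)$.

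Next I would compute the adjoints. The right adjoint $f^R$ is $\mathrm{RHom}_{\mcr{T}}(E_2 \oplus E_1, -)$, viewed as landing in $D(R)^c$ via the $R$-action coming from composition with elements of $V$; the left adjoint $f^L$ is the corresponding $\mathrm{RHom}$ twisted by the Serre functor $\mb{S}_{\mcr{T}}$, using properness of $\mcr{T}$. With these in hand, $ff^R$ is the "evaluation" functor $\mathrm{RHom}_{\mcr{T}}(E_2 \oplus E_1, -) \otimes_R (E_2 \oplus E_1)$, and I would compute its cone against the counit. The key computation is that $ff^R(F)$ assembles the two evaluation maps $\mathrm{RHom}(E_i, F) \otimes E_i \to F$ with the gluing term $\mathrm{RHom}(E_2, F) \otimes V \otimes E_1$ contributing exactly the correction that turns the naive "sum of cones" into the \emph{composition} $T_{E_2} \circ T_{E_1}$ rather than a direct sum. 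Concretely, filtering $R = k \oplus k \oplus V$ gives a two-step filtration of $ff^R$ whose associated graded pieces are $(- \otimes_k E_1) \circ (\text{ev on } E_1)$ and $(- \otimes_k E_2) \circ (\text{ev on } E_2)$, and chasing the octahedron identifies $\mathrm{cone}(ff^R \to \mathrm{Id})$ with $T_{E_2} \circ T_{E_1}$.

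For the cotwist, I would compute $f^R f = \mathrm{RHom}_{\mcr{T}}(E_2 \oplus E_1, - \otimes_R (E_2 \oplus E_1))$ as an endofunctor of $D(R)^c$, i.e. as $- \otimes_R M$ for the $R$-bimodule $M = \mathrm{RHom}_{\mcr{T}}(E_2 \oplus E_1, E_2 \oplus E_1)$. The $2 \times 2$ "matrix" of $M$ has $\mathrm{Hom}^\bullet(E_i, E_i) \simeq k \oplus k[-d]$ on the diagonal, $\mathrm{Hom}^\bullet(E_2, E_1) = V$ in one off-diagonal slot, and $\mathrm{Hom}^\bullet(E_1, E_2)$ in the other; by Serre duality the latter is $V^\vee[-d]$. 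Comparing with $R$ and $R^* = \mathrm{RHom}_k(R, k)$, one sees $M \simeq R \oplus R^*[-d]$ as an $R$-bimodule up to the unit map $\mathrm{Id}_R \to M$ splitting off the $R$ summand, so $C_f = \mathrm{cone}(\mathrm{Id} \to f^R f)[-1] \simeq - \otimes_R R^*[-1-d]$. I would also need to check conditions (3) and (4) in the definition of spherical functor — that $f^R \simeq f^L T_f[-1]$ and $f^R \simeq C_f f^L[1]$ — which follow formally from the Serre duality identifications between $f^R$ and $f^L$ once $T_f$ and $C_f$ are known; and that $T_f$ and $C_f$ are autoequivalences, which is automatic since $T_{E_2}, T_{E_1}$ are autoequivalences and $- \otimes_R R^*$ is the Serre functor of $D(R)^c$ (up to shift).

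The main obstacle will be the gluing bookkeeping in the computation of $T_f$: one must carefully track how the $R$-module structure (i.e. the action of $V$) enters the cone $\mathrm{cone}(ff^R \to \mathrm{Id})$ so as to obtain the composite $T_{E_2} \circ T_{E_1}$ and not merely a twist by the direct sum $E_1 \oplus E_2$. This is precisely where the non-formality of the composition lives, and it is handled by the octahedral-axiom argument built into the filtration of $R$ by the ideal $V$; making that argument rigorous, rather than the adjoint or cotwist computations, is the crux. Since all of this is carried out in \cite{Barb-Spherical-twists}, in the paper it suffices to invoke that reference together with the formality remark recorded just above the statement.
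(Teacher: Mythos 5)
Your proposal takes essentially the same approach as the paper: the paper does not reprove the result but simply cites \cite[Theorem 3.2.1]{Barb-Spherical-twists} together with the formality remark, and you correctly conclude by noting that this citation suffices. Two small slips in your sketch are worth flagging: the gluing correction in the twist computation should involve $\mathrm{RHom}(E_1,F)\otimes V\otimes E_2$ (precomposition landing in $\mathrm{RHom}(E_2,F)$) rather than $\mathrm{RHom}(E_2,F)\otimes V\otimes E_1$; and the endomorphism bimodule $M = \mathrm{RHom}_{\mcr{T}}(E_2\oplus E_1, E_2\oplus E_1)$ need not split as $R\oplus R^{\ast}[-d]$ — it only sits in the exact triangle $R\to M\to R^{\ast}[-d]$, whose non-splitness is precisely what makes \autoref{lem:tech-condition-dt-triangle} and the subsequent vanishing conditions non-trivial, though of course the identification $\mathrm{cone}(R\to M)\simeq R^{\ast}[-d]$ is all you actually need for $C_f$.
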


\begin{rmk}
	In \cite{Barb-Spherical-twists} the cotwist was described for the dg algebra and not for its associated graded algebra.
	The description of the cotwist in the above formulation follows from the fact that if $A \rightarrow B$ is a quasi isomorphism of dg algebras, then the dual map $B^{\ast} \rightarrow A^{\ast}$ is quasi isomorphism of $A-A$ dg bimodules.
\end{rmk}

In particular, as $R$ is smooth and proper, we see that the cotwist gives Serre duality on $D(R)^c$ up to a shift, see \cite{shklyarov2007serre}.

\subsection{A distinguished triangle}

Our aim is now to give sufficient conditions under which the technical condition of part (1) of \autoref{thm:entropy-twist-spherical-functor} is verified for the case of the composition of two spherical twists around spherical objects.

As a consequence of \autoref{thm:composition-twists}, we get the distinguished triangle of right $R$ dg modules
\[
	\begin{tikzcd}
		R \ar[r] & \text{RHom}_{\mcr{T}}(E_2 \oplus E_1, E_2 \oplus E_1) \ar[r] & R^{\ast}[-d] \ar[r] & R[1],
	\end{tikzcd}
\]
where $\text{RHom}_{\mcr{T}}(E_2 \oplus E_1, E_2 \oplus E_1)$ denotes the dg endomorphism algebra of (a dg lift of) $E_2 \oplus E_1$ in some dg enhancement of $\mcr{T}$.
This is triangle \eqref{dt-cotwist} for the spherical functor of \autoref{thm:composition-twists}.

By \autoref{lem:tech-condition-dt-triangle} we know that to satisfy the technical condition of \autoref{thm:entropy-twist-spherical-functor} is enough to prove 
\[
	0 = \text{Hom}_{D(R)}(R^{\ast}[-d], R[1]) \simeq H^{1+d}(R^{!}), \quad R^{!} = \text{RHom}_{D(R-R)}(R, R \otimes_k R).
\]

Recall that $V = \text{Hom}_{\mcr{T}}^{\bullet}(E_2, E_1)$.
We have

\begin{lem}
	\label{lem:vanishing-R-!}
	If
	\[
		V^{1+d} = (V^{\ast})^d = (V \otimes_k V^{\ast})^d = ( V^{\ast} \otimes_k V \otimes_k V)^{d} = 0,
	\]
	then $H^{1+d}(R^{!}) = 0$.
\end{lem}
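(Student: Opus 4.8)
The plan is to compute $R^{!}$ from a short projective bimodule resolution of $R$ and then extract its graded piece in degree $1+d$. Let $S = k\times k\subseteq R$ be the semisimple diagonal subalgebra, with orthogonal idempotents $e_1,e_2$ corresponding to $E_1,E_2$; thus $e_1Re_1 = e_2Re_2 = k$, $e_1Re_2 = V$ and $e_2Re_1 = 0$. As an $S$-bimodule $R = S\oplus V$, and since $V$ sits entirely in the $(e_1,e_2)$-corner one has $V\otimes_S V = 0$, equivalently $V\cdot V = 0$ in $R$. Hence the reduced bar resolution of $R$ over $S$ truncates after one step:
\[
	0\longrightarrow R\otimes_S V\otimes_S R \xrightarrow{\ \mu\ } R\otimes_S R \xrightarrow{\ m\ } R\longrightarrow 0,
\]
with $\mu(r\otimes v\otimes r') = rv\otimes r' - r\otimes vr'$ and $m$ the multiplication. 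The two non-zero terms are projective $R$-bimodules: $R$ is projective over $S$ on each side (as $S$ is semisimple), and $R\otimes_S(-)\otimes_S R$, being induction along $S\otimes_k S^{\mathrm{op}}\to R\otimes_k R^{\mathrm{op}}$, sends the (automatically projective) $S$-bimodules to projective $R$-bimodules.

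Next I would apply $\mathrm{RHom}_{D(R-R)}(-,R\otimes_k R)$ to this resolution, which represents $R^{!}$ by a two-term complex $C^0\xrightarrow{\ \mu^\ast\ }C^1$ with $C^0$ in cohomological degree $0$ and $C^1$ in cohomological degree $1$. By the adjunction $\mathrm{Hom}_{R-R}(R\otimes_S U\otimes_S R,N)\cong\mathrm{Hom}_{S-S}(U,N)$ one gets $C^0 = \mathrm{Hom}_{S-S}(S,R\otimes_k R) = \bigoplus_{i=1,2}e_iR\otimes_k Re_i$ and $C^1 = \mathrm{Hom}_{S-S}(V,R\otimes_k R) = \mathrm{Hom}_k(V,e_1R\otimes_k Re_2)$. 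Since $R^{!}$ is concentrated in these two consecutive cohomological degrees, its degree-$n$ cohomology is $H^n(R^{!})\cong\ker\big((\mu^\ast)^n\big)\oplus\mathrm{coker}\big((\mu^\ast)^{n-1}\big)$, where $(\mu^\ast)^j\colon(C^0)^j\to(C^1)^j$ is the internal-degree-$j$ component. Taking $n = 1+d$ reduces the lemma to the two vanishings $(C^0)^{1+d} = 0$ and $(C^1)^d = 0$.

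Then I would substitute the one-sided pieces, placing $k$ in internal degree $0$: $e_1R\cong k\oplus V$, $Re_1\cong k$, $e_2R\cong k$, $Re_2\cong V\oplus k$. Hence $C^0\cong(k\oplus V)\oplus(V\oplus k)$, whose degree-$(1+d)$ component is $V^{1+d}\oplus V^{1+d}$; as $d\ge 1$ the degree-$0$ summands do not interfere, so this vanishes by the hypothesis $V^{1+d}=0$. Likewise $e_1R\otimes_k Re_2\cong(k\oplus V)\otimes_k(V\oplus k)\cong V\oplus k\oplus(V\otimes_k V)\oplus V$, so, using $\mathrm{Hom}_k(V,W)\cong V^\ast\otimes_k W$ of graded vector spaces, $C^1\cong(V^\ast\otimes_k V)\oplus V^\ast\oplus(V^\ast\otimes_k V\otimes_k V)\oplus(V^\ast\otimes_k V)$. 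Its degree-$d$ component is $(V\otimes_k V^\ast)^d\oplus(V^\ast)^d\oplus(V^\ast\otimes_k V\otimes_k V)^d\oplus(V\otimes_k V^\ast)^d$ --- here using $(V^\ast\otimes_k V)^d\cong(V\otimes_k V^\ast)^d$ --- which is exactly the direct sum of the four spaces assumed to vanish. Thus $(C^1)^d = 0$, and both summands of $H^{1+d}(R^{!})$ are zero.

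The calculation is short once the framework is in place, so the real work is the bimodule bookkeeping: checking that the displayed sequence is genuinely a projective $R$-bimodule resolution --- which is precisely where $V\cdot V = 0$ enters --- applying the induction--restriction adjunction correctly, and pinning down the corners $e_iR$, $Re_i$ and $e_1(R\otimes_k R)e_2$ together with their internal gradings. After that the four hypotheses match, one for one, the four graded summands that survive (the copy of $V^{1+d}$ in $C^0$ and the three types $(V^\ast)^d$, $(V\otimes_k V^\ast)^d$, $(V^\ast\otimes_k V\otimes_k V)^d$ in $C^1$), so no estimates are needed; the only point to watch is that the degree conventions remain consistent with the identification $H^{1+d}(R^{!})\cong\mathrm{Hom}_{D(R)}(R^\ast[-d],R[1])$ recorded just before the statement.
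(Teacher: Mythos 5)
Your proof is correct and is essentially the paper's argument in different packaging: the short exact sequence of bimodules you construct from $V\cdot V=0$ is the same as the distinguished triangle $R\otimes_{k_1}V\otimes_{k_2}R\to R\otimes_{k_1}R\oplus R\otimes_{k_2}R\to R$ used in the paper, and your two-term complex $C^0\to C^1$ (with $C^0\cong k\oplus k\oplus V\oplus V$ and $C^1\cong V^\ast\oplus V^\ast\otimes V\oplus V^\ast\otimes V\oplus V^\ast\otimes V\otimes V$) is exactly what the paper obtains after applying $\mathrm{RHom}_{R-R}(-,R\otimes_kR)$. The paper then concludes by the long exact sequence, whereas you compute $H^{1+d}$ of the two-term complex directly; both reduce to the same four vanishings.
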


\begin{proof}
	For clarity let us denote $k_1$ the copy of $k$ acting on $R$ via $\text{id}_{E_1}$ and $k_2$ the one acting via $\text{id}_{E_2}$.
	Then, by the definition of $R$ we have the distinguished triangle of $R-R$ bimodules
	\[
	\begin{tikzcd}
		R \otimes_{k_1} V \otimes_{k_2} R \ar[r] & R \otimes_{k_1} R \oplus R \otimes_{k_2} R \ar[r] & R \ar[r] & R \otimes_{k_1} V \otimes_{k_2} R [1].
	\end{tikzcd}
	\]
	Using this distinguished triangle, we see that we have the distinguished triangle
	\[
		\begin{tikzcd}
			R^{!} \ar[r] & \begin{array}{c}\text{RHom}_{R-R}(R \otimes_{k_1} R, R \otimes_k R)\\ \oplus\\ \text{RHom}_{R-R}(R \otimes_{k_2} R, R \otimes_k R)\end{array} \ar[r] & \text{RHom}_{R-R}(R \otimes_{k_1} V \otimes_{k_2} R, R \otimes_k R) \ar[r] & R^![1].
		\end{tikzcd}
	\]

	Now notice that\footnote{Here the subscript means that we are restring the action via the inclusion $k_i \hookrightarrow R$.}
	\[
		{}_{k_1} R \simeq k_1 \oplus V \quad {}_{k_2} R \simeq k_2 \quad R_{k_1} \simeq k_1 \quad R_{k_2} \simeq V \oplus k_2.
	\]
	Using these isomorphisms of bimodules we can simplify the above distinguished triangle and get 
	\[
		\begin{tikzcd}
			R^{!} \ar[r] & k \oplus k \oplus V \oplus V \ar[r] & V^{\ast} \oplus V^{\ast} \otimes V \oplus V^{\ast} \otimes V \oplus V^{\ast} \otimes V \otimes V \ar[r] & R^![1].
		\end{tikzcd}
	\]

	Then, the statement follows from taking the long exact sequence induced by the above distinguished triangle.
\end{proof}

We now wish to show that all of the conditions of \autoref{lem:vanishing-R-!} can be achieved if $V$ satisfies\footnote{Notice that $V$ is bounded by construction as $\mcr{T}$ is proper.} $\mathrm{Hom}_{D^b(k)}(V, V[d]) = 0$.

Indeed, if we call $\max V$ and $\min V$ the maximum and the minimum degree respectively of a non zero element of $V$, then $\max V \leq d$ implies $V^{1+d} = 0$, $\max V < -d$ implies $(V^{\ast})^d = 0$, and $2 \max V - \min V < d$ implies $(V^{\ast} \otimes_k V \otimes_k V)^d = 0$.
Now notice that if we exchange $E_1$ with $E_1[n]$ the spherical twist doesn't change: $T_{E_1[n]} \simeq T_1$, but the degrees in which $V_n = \text{Hom}^{\bullet}_{\mcr{T}}(E_2, E_1[n])$ lives do.
More precisely, we have
\[
	\max V_n = \max V - n \quad \quad 2 \max V_n - \min V_n = 2 \max V - \min V - n.
\]
In particular, if we take $n >> 0$ the three inequalities above can always be achieved, and the only remaining vanishing required by \autoref{lem:vanishing-R-!} is $(V^{\ast} \otimes_k V)^d = \mathrm{Hom}_{D^b(k)}(V,V[d]) = 0$.

Hence we get

\begin{lem}
	\label{lem:inequalities}
	Let $E_1$ and $E_2$ be two $d$-spherical objects in $\mcr{T}$ a $k$-linear, proper, dg enhanced triangulated category with a split-generator and a Serre functor.
	Set $V := \textup{Hom}^{\bullet}_{\mcr{T}}(E_2, E_1)$.

	If $\mathrm{Hom}_{D^b(k)}(V,V[d]) = 0$, then, up to replacing $E_1$ with $E_1[n]$ and $V$ with $V[n]$ for $n>>0$, the assumption of part (1) of \autoref{thm:entropy-twist-spherical-functor} are satisfied for the spherical functor
	\[
		\begin{tikzcd}[column sep = 7em]
			D(R)^c \ar[r, "f := - \stackrel{L}{\otimes}_R (E_2 \oplus E_1)"] & \mcr{T}.
		\end{tikzcd}
	\]
\end{lem}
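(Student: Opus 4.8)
The plan is to combine Lemma~\ref{lem:vanishing-R-!} with the elementary degree bookkeeping already sketched in the paragraph preceding the statement, and then invoke the corollary to Lemma~\ref{lem:tech-condition-dt-triangle}. Concretely, by Lemma~\ref{lem:tech-condition-dt-triangle} (applied with $n=1$ to the spherical functor $f$ of \autoref{thm:composition-twists}), the assumption of part (1) of \autoref{thm:entropy-twist-spherical-functor} holds as soon as $\mathrm{Hom}_\mathscr{D}(C_f(G),G)=0$ for some split-generator $G$ of $\mathscr{D}=D(R)^c$. Taking $G=R$ and using $C_f\simeq -\stackrel{L}{\otimes}_R R^\ast[-1-d]$, this vanishing is exactly $H^{1+d}(R^!)=0$, which Lemma~\ref{lem:vanishing-R-!} reduces to the four numerical conditions
\[
	V^{1+d} = (V^{\ast})^d = (V \otimes_k V^{\ast})^d = (V^{\ast} \otimes_k V \otimes_k V)^d = 0.
\]

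First I would record how these four conditions translate into inequalities on $\max V$ and $\min V$. Since $V^{1+d}=0$ iff no nonzero element of $V$ sits in degree $1+d$, it suffices that $\max V\le d$; since $(V^\ast)^d$ lives in degree $d$ and $V^\ast$ has top degree $-\min V$, the condition $(V^\ast)^d=0$ follows from $-\min V < d$, i.e. $\min V > -d$ — but in fact for $n\gg 0$ it is cleaner to use $\max V < -d$ after shifting, as in the text; the condition $(V^\ast\otimes_k V\otimes_k V)^d=0$ follows from $2\max V-\min V<d$ because the top degree of $V^\ast\otimes_k V\otimes_k V$ is $-\min V+2\max V$. The remaining condition $(V\otimes_k V^\ast)^d=0$ is precisely $\mathrm{Hom}_{D^b(k)}(V,V[d])=0$, which is our hypothesis and is shift-invariant.

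Next I would exploit the freedom to replace $E_1$ by $E_1[n]$. Since $T_{E_1[n]}\simeq T_{E_1}=T_1$, the composition $T_2\circ T_1$ — and hence the functor whose entropy we want — is unchanged, but $V$ gets replaced by $V_n:=\mathrm{Hom}^\bullet_\mathscr{T}(E_2,E_1[n])\simeq V[n]$, so $\max V_n=\max V-n$ and $2\max V_n-\min V_n=2\max V-\min V-n$, while $\mathrm{Hom}_{D^b(k)}(V_n,V_n[d])\simeq\mathrm{Hom}_{D^b(k)}(V,V[d])=0$ still holds. Because $V$ is bounded (as $\mathscr{T}$ is proper), for $n$ large enough all three inequalities $\max V_n\le d$, $\max V_n<-d$, $2\max V_n-\min V_n<d$ are simultaneously satisfied; concretely one may take $n>\max(\,\max V+d,\ 2\max V-\min V-d\,)$, noting the second bound dominates the requirement $\max V_n\le d$. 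Feeding this $V_n$ into Lemma~\ref{lem:vanishing-R-!} gives $H^{1+d}(R^!)=0$ for the corresponding dg algebra $R=k\oplus k\oplus V_n$, hence the hypothesis of part (1) of \autoref{thm:entropy-twist-spherical-functor} holds for $f=-\stackrel{L}{\otimes}_R(E_2\oplus E_1)$, which is the claim.

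There is essentially no serious obstacle here — the statement is a bookkeeping assembly of results proved earlier in the excerpt. The only point requiring a little care is the translation between the vector-space vanishings of Lemma~\ref{lem:vanishing-R-!} and the degree inequalities, i.e. correctly computing the top (and bottom) degrees of the tensor powers $V^\ast$, $V\otimes_k V^\ast$, $V^\ast\otimes_k V\otimes_k V$ in terms of $\max V$ and $\min V$, and then checking that a single shift $n\gg 0$ kills all of them at once while leaving the shift-invariant condition $\mathrm{Hom}_{D^b(k)}(V,V[d])=0$ untouched. I would also remark that replacing $E_1$ by $E_1[n]$ leaves the spherical-object hypothesis on $E_1$ intact (a shift of a $d$-spherical object is $d$-spherical) and does not affect $E_2$, so the setup of \autoref{thm:composition-twists} still applies verbatim to the pair $(E_1[n],E_2)$.
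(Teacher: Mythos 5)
Your proof is correct and follows the paper's own argument essentially verbatim: reduce to the four vanishings of \autoref{lem:vanishing-R-!} via \autoref{lem:tech-condition-dt-triangle} (with $n=1$, $G=R$, and $C_f(R)\simeq R^\ast[-1-d]$), translate three of them into degree inequalities on $\max V$ and $\min V$, observe those inequalities are met after replacing $E_1$ by $E_1[n]$ for $n\gg 0$ (which leaves $T_1$, and hence $T_2\circ T_1$, unchanged), and note the fourth vanishing $(V\otimes_k V^\ast)^d=0$ is the shift-invariant hypothesis $\mathrm{Hom}_{D^b(k)}(V,V[d])=0$. The explicit bound you give for $n$ and your remark that a shifted $d$-spherical object remains $d$-spherical are minor useful additions but do not change the route.
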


Let us remark that we don't know whether the condition $\mathrm{Hom}_{D^b(k)}(V,V[d]) = 0$ is really needed or whether it can be removed by a more thorough study of the map $R^{\ast}[-d] \rightarrow R[1]$.

\begin{rmk}
	In principle what we did in this section can be done for any upper triangular dg algebra, and hence one could try to find sufficient conditions under which the hypothesis of part (1) of \autoref{thm:entropy-twist-spherical-functor} is satisfied for any couple of spherical functors.

	Unfortunately, the problem is that the terms involved are now $\text{RHom}$'s between dg bimodules over the dg algebras $A$ and $B$ from which the upper triangular dg algebra $R$ is constructed.
	Hence, homs can go in any direction regardless of the cohomological bounds we impose.

	However, it is worthy to point out that in the case of the dg algebra arising from \cite[Theorem 4.1.2]{Barb-Spherical-twists} for the composition of many spherical twists around spherical objects, it is still possible to give sufficient conditions based on cohomological bounds (because we can bring all the $\text{RHom}$ back at the vertices of the dg algebra).
\end{rmk}

\subsection{Categorical entropy of the Serre functor}

\autoref{thm:composition-twists} and \autoref{lem:vanishing-R-!} tell us that if $\mathrm{Hom}_{D^b(k)}(V,V[d]) = 0$, $V = \text{Hom}_{\mcr{T}}^{\bullet}(E_2, E_1)$, then the entropy of $T_2 \circ T_1$ can be computed using the entropy of the Serre functor for $D(R)^c$, $R = k \oplus k \oplus V$ (up to shift $V$, but we won't care about this because shifting $V$ won't affect the final result, as it ought to be).

Even though our motivation for computing the entropy of the Serre functor of $D(R)^c$ is computing the entropy of $T_2 \circ T_1$, the result of this section apply for any upper triangular dg algebra of the form $k \oplus k \oplus W$ where $W$ is a graded vector space.
Hence, in the following $A$ will denote any upper triangular dg algebra of the form $A = k \oplus k \oplus W$.

We know by \cite{shklyarov2007serre} that the Serre functor for $D(A)^c$ is given by $\mb{S}_A := - \stackrel{L}{\otimes}_{A} A^{\ast}$, so the only thing we have to do is to compute $h_t(\mb{S}_A)$.

Unfortunately, this is not an easy task for a general $t \in \mb{R}$.
However, using the results of \cite{Elagin-examples-dimensions}, we will be able to compute the categorical entropy of $\mb{S}_A$, {\it i.e.} $h_0(\mb{S}_A)$.

\begin{rmk}
	In \cite[pag. 32]{DHKK-Dynamical-systems} the authors state the value of the entropy of the Serre functor for the derived category of the Kronecker quiver with $m \geq 3$ arrows. Our computations will recover that value when $W$ lives only in degree $0$, and they will show that the grading on $W$ doesn't affect $h_0(\mb{S}_A)$.
\end{rmk}

Assume from now on $\dim W \geq 2$.
As our category is of the form $D(A)^c$, by \cite[Theorem 2.6]{DHKK-Dynamical-systems} we know that computing $h_0(\mb{S}_A)$ amounts to computing
\[
	\lim_{m \to +\infty} \frac{1}{m} \log \left( \sum_{n \in \mathbb{Z}} \dim H^n \left( (A^{\ast})^{\otimes_A m} \right)\right).
\]
Thanks to\footnote{What we denote $d_m$ is $\dim \psi_{m}(W)$ in {\it ibidem}.} \cite[Lemma 8.2]{Elagin-examples-dimensions}, we know that
\[
	\sum_{n \in \mathbb{Z}} \dim H^n \left( (A^{\ast})^{\otimes_A m} \right) = d_{2m-2} + d_{2m-3} + d_{2m-1} + d_{2m-2},
\]
where $d_m$ satisfies the relations
\begin{equation}
	\label{recurrence-eq}
	\renewcommand{\arraystretch}{1.2}
	\begin{array}{l}
		d_{m+2} + d_{m} = d_{m+1} \cdot \dim W \quad \forall \; m \geq -1\\
		d_{1} = \dim W\\
		d_{0} = 1\\
		d_{-1} = 0.
	\end{array}
\end{equation}

Set $N = \dim W$.
To solve this recurrence relation we use the characteristic equation
\[
	N \sigma^{-1} - \sigma^{-2} = 1 \iff \sigma_{\pm} = \frac{N \pm \sqrt{N^2-4}}{2}.
\]
We see that we have to distinguish between two cases.

If $N=2$ the the solution to the recurrence equation is given by
\[
	d_m = m +1.
\]

If $N \geq 3$ then the solution is given by
\[
	d_m = \alpha \sigma_{-}^{m} + \beta \sigma_{+}^{m}, \quad \alpha = \frac{1}{2} - \frac{N}{2\sqrt{N^2 - 4}}, \quad \beta = \frac{1}{2} + \frac{N}{2\sqrt{N^2 - 4}}.
\]

Hence we get

\begin{lem}
	\label{lem:entropy-at-0-Serre}
	We have
	\[
		h_0( \mb{S}_A) = \left\{
		\begin{array}{lc}
			0 & \dim W = 2\\
			\displaystyle{\log \left(\frac{(\dim W)^2 - 2 + \sqrt{(\dim W)^4-4(\dim W)^2}}{2}\right)} > 0 & \dim W \geq 3
		\end{array}
		\right..
	\]
\end{lem}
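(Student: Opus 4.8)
The plan is to compute the categorical entropy $h_0(\mb{S}_A)$ directly from the formula
\[
	h_0(\mb{S}_A) = \lim_{m \to +\infty} \frac{1}{m} \log \left( \sum_{n \in \mathbb{Z}} \dim H^n \left( (A^{\ast})^{\otimes_A m} \right)\right)
	= \lim_{m \to +\infty} \frac{1}{m}\log\bigl(2d_{2m-2} + d_{2m-3} + d_{2m-1}\bigr),
\]
using the already-stated identity from \cite[Lemma 8.2]{Elagin-examples-dimensions} together with the recurrence \eqref{recurrence-eq}. Since the right-hand side is (up to bounded multiplicative constants) governed by the growth rate of $d_m$, the limit equals $\log(\mu^2)$ where $\mu$ is the growth rate of the sequence $(d_m)$; the factor $2$ in the exponent comes from the index $2m$. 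So the whole computation reduces to: (i) solving the recurrence for $d_m$, which the excerpt has already done, and (ii) carefully taking the limit.

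First I would treat the case $N = \dim W = 2$. Here $d_m = m+1$ grows polynomially, so $2d_{2m-2}+d_{2m-3}+d_{2m-1} = \Theta(m)$, and $\frac{1}{m}\log(\Theta(m)) \to 0$. Hence $h_0(\mb{S}_A) = 0$, as claimed. Next, for $N \geq 3$, I would use the closed form $d_m = \alpha\sigma_-^m + \beta\sigma_+^m$ with $\sigma_\pm = \tfrac{N \pm \sqrt{N^2-4}}{2}$. Since $N \geq 3$ we have $\sigma_+ > 1 > \sigma_- > 0$ and $\beta > 0$, so $d_m \sim \beta\sigma_+^m$ as $m \to \infty$; in particular $2d_{2m-2}+d_{2m-3}+d_{2m-1} \sim C\sigma_+^{2m}$ for a positive constant $C$. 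Therefore
\[
	h_0(\mb{S}_A) = \lim_{m\to\infty}\frac{1}{m}\log\bigl(C\sigma_+^{2m}(1+o(1))\bigr) = 2\log\sigma_+ = \log(\sigma_+^2).
\]
It then remains to rewrite $\sigma_+^2$ in the form in the statement. Since $\sigma_+\sigma_- = 1$ and $\sigma_+ + \sigma_- = N$, one has $\sigma_+^2 + \sigma_-^2 = N^2 - 2$ and $\sigma_+^2\sigma_-^2 = 1$, so $\sigma_+^2$ is the larger root of $x^2 - (N^2-2)x + 1 = 0$, giving $\sigma_+^2 = \tfrac{(N^2-2) + \sqrt{(N^2-2)^2 - 4}}{2} = \tfrac{N^2 - 2 + \sqrt{N^4 - 4N^2}}{2}$, which is exactly the displayed expression with $N = \dim W$. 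One should also note $\sigma_+^2 > 1$, so the logarithm is strictly positive.

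I do not expect any serious obstacle: the genuinely delicate inputs — the presentation of $\mb{S}_A$ via $-\stackrel{L}{\otimes}_A A^\ast$, the DHKK formula reducing $h_0$ to dimension growth of iterated tensor powers, and the combinatorial identity expressing $\sum_n \dim H^n((A^\ast)^{\otimes_A m})$ in terms of the $d_i$ — are all quoted from \cite{DHKK-Dynamical-systems} and \cite{Elagin-examples-dimensions}, and the recurrence has already been solved in the text. The only points requiring a little care are: checking that $\beta > 0$ and $\sigma_+ > 1$ when $N \geq 3$ so that the dominant-root asymptotics are valid and the answer is positive; confirming that adding three terms with shifted indices and multiplying by the constant $C$ does not affect the $\frac{1}{m}\log(-)$ limit; and handling the $N=2$ boundary case separately because there the characteristic equation has the double root $\sigma_+ = \sigma_- = 1$ and the growth is only polynomial. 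Assembling these observations yields the stated formula.
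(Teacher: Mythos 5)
Your proposal is correct and follows essentially the same route as the paper: both invoke the Elagin identity and the solved recurrence, reduce the problem to the growth rate of $d_m$, and separate the cases $N=2$ (double root, polynomial growth) and $N\geq 3$ (dominant root $\sigma_+$). The only cosmetic difference is that the paper first uses the recurrence to collapse the three-term sum to $(2+\dim W)\,d_{2m-2}$ before taking the limit, while you estimate the sum directly; you also spell out the algebraic identity $\sigma_+^2 = \tfrac{N^2-2+\sqrt{N^4-4N^2}}{2}$, which the paper leaves implicit.
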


\begin{proof}
	Notice that by the recurrence relations \eqref{recurrence-eq} we have
	\[
		\sum_{n \in \mathbb{Z}} \dim H^n \left( (A^{\ast})^{\otimes_A m} \right) = (2 + \dim W) d_{2m-2}.
	\]
	Hence we have
	\[
		h_0( \mb{S}_A) = \lim_{m \to +\infty} \frac{1}{m} \log (d_{2m-2}).
	\]

	If $N=2$ we have
	\[
		h_0( \mb{S}_A) = \lim_{m \to +\infty} \frac{1}{m} \log (2m-1) = 0.
	\]

	If $N \geq 3$ have
	\[
		\begin{aligned}
			h_0( \mb{S}_A) & \, = \lim_{m \to +\infty} \frac{1}{m} \log (\alpha \sigma_{-}^{2m-2} + \beta \sigma_{+}^{2m-2}) \\
			& \, =  \lim_{m \to +\infty} \frac{1}{m} (2m-2)\log ( \sigma_{+} ) = \log(\sigma_{+}^2),
		\end{aligned}
	\]
	where in the second line we used that $\sigma_{+} > \sigma_{-}$ and $\beta \neq 0$.
\end{proof}

\section{Composition of two spherical twists around spherical objects}



Now that we have introduced all the pieces that we need, we can move on to compute the categorical entropy of the composition of two spherical twists around spherical objects, and in some cases the entropy itself.

Let us recall the setting.
We have $\mcr{T} $ a $k$-linear, proper, dg enhanced triangulated category with a split-generator and a Serre functor.
Moreover, we have two $d$-spherical objects $E_1$, $E_2 \in \mcr{T}$, and we want to compute the entropy of $T_2 \circ T_1$, where $T_i = T_{E_i}$.

By \autoref{thm:composition-twists} we know that for $f = - \stackrel{L}{\otimes}_R (E_2 \oplus E_1): D(R)^c \rightarrow \mcr{T}$, where $R = k \, \text{id}_{E_2} \oplus k \, \text{id}_{E_1} \oplus \text{Hom}_{\mcr{T}}^{\bullet}(E_2, E_1)$, we have $T_2 \circ T_1 \simeq T_f$, $C_f \simeq \mb{S}_R[-1-d]$.

Moreover, by \autoref{lem:inequalities} we know that if $\mathrm{Hom}_{D^b(k)}(V,V[d]) = 0$, $V = \text{Hom}_{\mcr{T}}^{\bullet}(E_2, E_1)$, then we can compute the entropy of $T_2 \circ T_1$ using \autoref{thm:entropy-twist-spherical-functor}.

Finally, by \autoref{lem:entropy-at-0-Serre} we know the exact value $h_{0}(\mb{S}_R)$ when $\dim V \geq 2$.

Let us put together all these pieces to get the following results.

\begin{thm}
	\label{thm:entropy-composition-1}
	Assume $\mathrm{Hom}_{D^b(k)}(V,V[d]) = 0$, then the categorical entropy of $T_2 \circ T_1$ is given by
		\[
			\renewcommand{\arraystretch}{1.5}
			h_0(T_2 \circ T_1) = 
			\left\{
				\begin{array}{cl}
				0 & \dim V = 0,1,2\\
				\displaystyle{\log \left( \frac{(\dim V)^2 - 2 + \sqrt{(\dim V)^4 -4(\dim V)^2}}{2}\right)} > 0 & \dim V \geq 3
				\end{array}
			\right..
		\]
	Moreover, if $\dim V = 0$ we have
	\[
		h_t(T_{2} \circ T_{1}) = \left\{
			\begin{array}{lr}
				(1-d)t & t \leq 0\\
				\leq 0 & \mathrm{otherwise}
			\end{array}
		\right.,
	\]
	while if $\dim V = 1$ we have
	\[
		\renewcommand{\arraystretch}{1.5}
		h_t(T_{2} \circ T_{1}) = \left\{
			\begin{array}{lr}
				\displaystyle{\left( \frac{4}{3}-d \right)t} & \displaystyle{\forall t :\left( \frac{4}{3}-d \right)t \geq 0}\\
				\leq 0 & \mathrm{otherwise}
			\end{array}
		\right..
	\]
	In all cases, if $E_1^{\perp} \cap E_2^{\perp} \neq 0$, then $h_t(T_2 \circ T_1) \geq 0$, and in particular it is identically zero as soon as it is non-positive.
\end{thm}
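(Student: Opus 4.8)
The plan is to combine the results of the previous sections. By \autoref{thm:composition-twists}, $T_2 \circ T_1 \simeq T_f$ for the spherical functor $f = - \stackrel{L}{\otimes}_R (E_2 \oplus E_1) : D(R)^c \to \mcr{T}$ with $R = k \oplus k \oplus V$, and $C_f \simeq \mb{S}_R[-1-d]$. Since $\mathrm{Hom}_{D^b(k)}(V,V[d]) = 0$, \autoref{lem:inequalities} lets me replace $E_1$ by $E_1[n]$ for $n \gg 0$ — an operation that changes neither $T_2 \circ T_1$ (because $T_{E_1[n]} \simeq T_1$) nor $\dim V$ — so that part (1) of \autoref{thm:entropy-twist-spherical-functor} applies to $f$. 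Using $C_f[2] \simeq \mb{S}_R[1-d]$, this gives, for every $t \in \mb{R}$,
\[
	h_t(\mb{S}_R[1-d]) \;\leq\; h_t(T_2 \circ T_1) \;\leq\;
	\begin{cases}
		0 & \text{if } h_t(\mb{S}_R[1-d]) \leq 0,\\
		h_t(\mb{S}_R[1-d]) & \text{if } h_t(\mb{S}_R[1-d]) \geq 0,
	\end{cases}
\]
so everything is reduced to understanding $h_t(\mb{S}_R[1-d]) = h_t(\mb{S}_R) + (1-d)t$; since only $\dim V$ enters, I may choose whatever internal degree for $V$ is convenient.

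For $\dim V \geq 2$ I would read $h_0(\mb{S}_R)$ off \autoref{lem:entropy-at-0-Serre}. As $h_0$ is shift-invariant, $h_0(\mb{S}_R[1-d]) = h_0(\mb{S}_R)$ equals $0$ if $\dim V = 2$ and $\log\!\big(\tfrac{(\dim V)^2 - 2 + \sqrt{(\dim V)^4 - 4(\dim V)^2}}{2}\big) > 0$ if $\dim V \geq 3$; putting $t = 0$ in the displayed inequality forces $h_0(T_2 \circ T_1)$ to this value (for $\dim V = 2$ the upper bound is $0$, for $\dim V \geq 3$ the two bounds coincide). For $\dim V = 0$ one has $R = k \oplus k$, whose bimodule dual $R^\ast$ is isomorphic to $R$, so $\mb{S}_R \simeq \mathrm{Id}$ and $C_f[2] \simeq [1-d]$; since $h_t([m]) = mt$, the displayed inequality becomes the equality $h_t(T_2 \circ T_1) = (1-d)t$ on the half-line where $(1-d)t \geq 0$ (that is, $t \leq 0$, as $d \geq 1$) and gives $(1-d)t \leq h_t(T_2 \circ T_1) \leq 0$ otherwise, exactly as claimed.

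The case $\dim V = 1$, not covered by \autoref{lem:entropy-at-0-Serre}, is where the real work lies. Here $R$ is a formal, $\mb{Z}$-graded form of the path algebra of the $A_2$ quiver, and I claim its Serre functor obeys the graded analogue of the $\tfrac13$-fractional Calabi--Yau relation $\mb{S}_R^{3} \simeq [1]$. I would prove this by hand: taking $S_1, S_2$ to be the two simple $R$-modules — which split-generate $D(R)^c$ — and carrying along both the cohomological shift and the internal degree $a$ of the generator of $V$, one computes the $\mb{S}_R$-orbits $S_1 \rightsquigarrow I_1 \rightsquigarrow S_2[a] \rightsquigarrow S_1[1]$ and $S_2 \rightsquigarrow S_1[1-a] \rightsquigarrow I_1[1-a] \rightsquigarrow S_2[1]$, where $I_1, I_2$ are the indecomposable injectives and the powers of $a$ cancel out; hence $\mb{S}_R^{3}(S_i) \simeq S_i[1]$ and $\mb{S}_R^{3} \simeq [1]$. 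Then $h_t(\mb{S}_R) = \tfrac13 h_t(\mb{S}_R^{3}) = \tfrac13 h_t([1]) = \tfrac{t}{3}$, so $h_t(\mb{S}_R[1-d]) = \big(\tfrac43 - d\big)t$, and the displayed inequality yields $h_t(T_2 \circ T_1) = \big(\tfrac43 - d\big)t$ wherever $\big(\tfrac43 - d\big)t \geq 0$ and $h_t(T_2 \circ T_1) \leq 0$ elsewhere; in particular $h_0(T_2 \circ T_1) = 0$. I expect the bookkeeping of the internal grading in this orbit computation — together with the attendant verification that the auxiliary shift $E_1 \rightsquigarrow E_1[n]$ leaves the final answer unchanged — to be the main technical obstacle.

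Finally, for the closing assertion I would invoke part (2) of \autoref{thm:entropy-twist-spherical-functor}: if $0 \neq G \in E_1^{\perp} \cap E_2^{\perp}$, then $f^R(G) = \mathrm{RHom}_{\mcr{T}}(E_2 \oplus E_1, G) = 0$, hence $ff^R(G) = 0$ and $\mathrm{Ker}\, ff^R \neq 0$, so $h_t(T_2 \circ T_1) \geq 0$ for all $t$. In particular, at every $t$ where one of the upper bounds above gives $h_t(T_2 \circ T_1) \leq 0$, we conclude $h_t(T_2 \circ T_1) = 0$.
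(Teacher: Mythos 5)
Your proof is correct and follows essentially the same approach as the paper: reduce to $h_t(\mb{S}_R[1-d])$ via \autoref{thm:composition-twists}, \autoref{lem:inequalities}, and \autoref{thm:entropy-twist-spherical-functor}, then split cases using \autoref{lem:entropy-at-0-Serre} for $\dim V\ge 2$, a direct computation for $\dim V=0$, the $1/3$-fractional Calabi--Yau property of $A_2$ for $\dim V=1$, and part (2) of \autoref{thm:entropy-twist-spherical-functor} for the last assertion. The only cosmetic difference is in the $\dim V=1$ case, where you propose to carry the internal degree $a$ through an explicit orbit computation of $\mb{S}_R^3\simeq[1]$, whereas the paper shifts $V$ into degree zero so that $R$ is the ordinary $A_2$ path algebra and cites the literature; both are valid since the shift induces an equivalence that conjugates the Serre functors.
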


\begin{proof}
	The first statement is a rephrasing of \autoref{lem:entropy-at-0-Serre} taking into account $C_f = \mb{S}_R[1-d]$.

	If $\dim V = 0$ we have $R = k \oplus k$ and $\mb{S}_R = \mathrm{id} \oplus \mathrm{id}$ on $D(R)^c \simeq D(k)^c \oplus D(k)^c$.
	Hence, using \cite[Theorem 2.6]{DHKK-Dynamical-systems} to compute $h_t(\mb{S}_R)$, we have
	\[
		h_t(T_{2} \circ T_{1}) = \underbrace{h_t(\mb{S}_R)}_{= 0} + (1-d)t = (1-d)t \quad \forall t : (1-d)t \geq 0 
	\]
	and $h_t(T_{2} \circ T_{1}) \leq 0$ otherwise.

	If $\dim V = 1$ we can always shift $V$ so that $R$ is the path algebra of the Dynkin quiver $A_2$.
	Hence, $D(R)^c$ is fractional Calabi--Yau of dimension $1/3$, see \cite{CY-categories-Keller}, \cite{CDIM-CY-algebras}.
	For any $d \geq 1$ \autoref{lem:inequalities} applies (without shifting $V$), and therefore, using once again \cite[Theorem 2.6]{DHKK-Dynamical-systems}, we get
	\[
		h_t(T_{2} \circ T_{1}) = h_t(\mb{S}_R) + (1-d)t = \left( \frac{4}{3}-d \right)t \quad \forall t :\left( \frac{4}{3}-d \right)t \geq 0
	\]
	and $h_t(T_{2} \circ T_{1}) \leq 0$ otherwise.
	The statement about the case in which the common orthogonal is not zero follows from \autoref{thm:entropy-twist-spherical-functor}.
\end{proof}

\begin{rmk}
	When $\dim V = 0$ the twists $T_2$ and $T_1$ commute with each other.
	In this case the result we obtained can also be proved using the same strategy used in \cite[Theorem 3.1]{Ouchi-entropy-spherical-twist}.
\end{rmk}

\begin{rmk}
	It was noticed in \cite[Theorem 3.1]{Ouchi-Hyperkahler} and \cite[Remark 3.5]{Mattei-categorical-entropy-surfaces} that the composition of many spherical twists can have positive categorical entropy, but the value of the entropy was not computed.
	The above theorem gives the precise value of the entropy of the composition of two spherical twists and tells us when it is positive.
\end{rmk}

\begin{thm}
	\label{thm:entropy-composition-2}
	Assume $\mathrm{Hom}_{D^b(k)}(V,V[d]) = 0$ and set $w = \max V - \min V$.
	If $w = 0$, then
		\[
			h_t(T_{2} \circ T_{1}) =
			\left\{
				\begin{array}{lc}
					(2-d)t & \mathrm{if} \; \, (2-d)t\geq 0\\
					\leq 0 & \mathrm{otherwise}
				\end{array}
			\right..
		\]

	Futhermore, if $\mcr{T} = D(T)^c$ for a smooth, compact dg algebra $T$ and $\dim V = 2$, then we have the following:
	\begin{enumerate}
		\item if $d+w \geq 2$ and $d-w > 2$, then
		\[
			h_t(T_{2} \circ T_{1}) =
			\left\{
				\begin{array}{cc}
					(2-(d+w))t & t \leq 0\\
					\leq 0 & t \geq 0
				\end{array}
			\right.;
		\]
		\item if $d+w \geq 2$ and $d-w \leq 2$, then
		\[
			h_t(T_{2} \circ T_{1}) =
			\left\{
				\begin{array}{cc}
					(2-(d+w))t & t \leq 0\\
					(2-(d-w))t & t \geq 0
				\end{array}
			\right.;
		\]
		\item if $d+w < 2$ and $d-w > 2$, then
		\[
			h_t(T_{2} \circ T_{1}) \leq 0 \quad \forall t \in \mb{R};
		\]
		\item if $d+w < 2$ and $d-w \leq 2$, then
		\[
			h_t(T_{2} \circ T_{1}) =
			\left\{
				\begin{array}{cc}
					\leq 0 & t \leq 0\\
					(2-(d-w))t & t \geq 0
				\end{array}
			\right.;
		\]
	\end{enumerate}
	In all cases, if $E_1^{\perp} \cap E_2^{\perp} \neq 0$, then $h_t(T_2 \circ T_1) \geq 0$, and in particular it is identically zero as soon as it is non-positive.
\end{thm}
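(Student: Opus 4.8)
The plan is to feed the computation of the categorical entropy of the Serre functor (\autoref{lem:entropy-at-0-Serre}), together with a graded refinement of it, into the reduction already set up in this section and into \autoref{thm:entropy-twist-spherical-functor}. By \autoref{thm:composition-twists} we have $T_2\circ T_1\simeq T_f$ and $C_f\simeq\mb{S}_R[-1-d]$, so $C_f[2]\simeq\mb{S}_R[1-d]$ and hence $h_t(C_f[2])=h_t(\mb{S}_R)+(1-d)t$ by \cite[Theorem 2.6]{DHKK-Dynamical-systems}. Since $\textup{Hom}_{D^b(k)}(V,V[d])=0$, \autoref{lem:inequalities} guarantees (after replacing $E_1$ by $E_1[n]$ for $n\gg 0$, which changes neither $T_2\circ T_1$ nor $w$) that part (1) of \autoref{thm:entropy-twist-spherical-functor} applies, and \autoref{thm:entropy-composition-1} gives $h_0(T_2\circ T_1)=0$ for $\dim V\le 2$; thus the whole statement is reduced to determining $h_t(\mb{S}_R)$.

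For the first ($w=0$) assertion, $R$ is, after a harmless shift of $E_1$, the path algebra of the $2$-Kronecker quiver, so $D(R)^c\simeq D^b(\mb{P}^1)$ and $\mb{S}_R$ is $(-\otimes\mathcal{O}_{\mb{P}^1}(-2))[1]$. With respect to the tilting generator $\mathcal{O}\oplus\mathcal{O}(1)$ the groups $\textup{Hom}^\bullet(\mathcal{O}(a),\mathcal{O}(-2m))$ all sit in a single cohomological degree with dimensions linear in $m$, so \cite[Theorem 2.6]{DHKK-Dynamical-systems} gives $h_t(-\otimes\mathcal{O}(-2))=0$ for every $t$, hence $h_t(\mb{S}_R)=t$ and $h_t(C_f[2])=(2-d)t$; feeding this into \autoref{thm:entropy-twist-spherical-functor}(1) yields the displayed formula. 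Note this step uses only \autoref{thm:entropy-twist-spherical-functor}, since $h_t(C_f[2])$ has constant sign on each half-line, so no hypothesis on $\mcr{T}$ is needed. For $\dim V=2$ and arbitrary $w$, $R$ is the path algebra of a graded $2$-Kronecker quiver whose two arrows lie in degrees differing by $w$, and I would prove
\[
	h_t(\mb{S}_R)=t+w|t|
\]
by refining the computation behind \autoref{lem:entropy-at-0-Serre}: the tensor powers $(R^{\ast})^{\otimes_R m}$ satisfy a cohomological-degree-graded version of the recurrence \eqref{recurrence-eq}, with $\dim W$ replaced by the graded dimension of the arrow space, and solving it shows that $\bigoplus_j H^j\big((R^{\ast})^{\otimes_R m}\big)$ has total dimension growing polynomially in $m$ but distributed over an interval of cohomological degrees of length $\sim 2wm$ whose centre moves like $-m$; this gives $\tau^{+}(\mb{S}_R)=1+w$, $\tau^{-}(\mb{S}_R)=1-w$, and together with $h_0(\mb{S}_R)=0$ and \autoref{prop:bound-entropies} it pins $h_t(\mb{S}_R)$ down on each half-line.

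Granting $h_t(\mb{S}_R)=t+w|t|$, we obtain $h_t(C_f[2])=(2-(d-w))t$ for $t\ge 0$ and $h_t(C_f[2])=(2-(d+w))t$ for $t\le 0$. Reading off \autoref{thm:entropy-twist-spherical-functor}(1): on $t\ge 0$ the entropy of $T_2\circ T_1$ equals $(2-(d-w))t$ if $2-(d-w)\ge 0$ and is $\le 0$ otherwise, while on $t\le 0$ it equals $(2-(d+w))t$ if $2-(d+w)\le 0$ and is $\le 0$ otherwise; the four sign combinations of $2-(d-w)$ and $2-(d+w)$ are precisely cases (1)–(4). Here $\mcr{T}=D(T)^c$ enters so that \autoref{prop:bound-entropies} may also be invoked for $T_2\circ T_1$ itself, turning the asymptotic information into equalities on the two half-lines. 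Finally, if $E_1^{\perp}\cap E_2^{\perp}\neq 0$ then $\textup{Ker}\,ff^{R}\neq 0$, so part (2) of \autoref{thm:entropy-twist-spherical-functor} gives $h_t(T_2\circ T_1)\ge 0$; combined with the non-positive bounds obtained above, this forces the entropy to vanish wherever it is non-positive.

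The main obstacle is the graded refinement of \autoref{lem:entropy-at-0-Serre} in the case $\dim V=2$, $w\ge 1$: unlike at $t=0$, where only the total dimension of $\bigoplus_j H^j\big((R^{\ast})^{\otimes_R m}\big)$ matters, here one must control exactly how this cohomology spreads over the cohomological grading as $m\to\infty$ — the internal grading of $R$ now contributes in both directions, so a naive iterated cone construction yields only one-sided estimates — and one must check that the resulting slopes $1\pm w$ are insensitive to the auxiliary shift of $E_1$ introduced at the reduction step, the point being that the shift-dependent part of $h_t(\mb{S}_R)$ only affects the region where \autoref{thm:entropy-twist-spherical-functor}(1) already gives no more than an inequality.
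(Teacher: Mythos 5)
Your proposal is essentially correct and follows the same reduction as the paper: pass through \autoref{thm:composition-twists} and \autoref{lem:inequalities} to $h_t(\mathbb{S}_R)$, treat $w=0$ via the $\mathbb{P}^1$ identification, and use the asymptotics of $\mathbb{S}_R$ together with \autoref{prop:bound-entropies} to settle $w\neq 0$. The one organizational difference is genuine: you propose to compute $h_t(\mathbb{S}_R)=t+w|t|$ \emph{exactly} by establishing $\tau^{\pm}(\mathbb{S}_R)=1\pm w$ and applying \autoref{prop:bound-entropies} to $\mathbb{S}_R$ on $D(R)^c$ (which is always smooth and compact), and then reading the answer straight off \autoref{thm:entropy-twist-spherical-functor}(1); the paper instead establishes only the asymptotic slopes of $h_t(T_2\circ T_1)$ in \autoref{lem:asymptotic-behaviour} — via \cite[Proposition 8.4]{Elagin-examples-dimensions} together with a continuity-of-entropy argument — and then applies \autoref{prop:bound-entropies} to $T_2\circ T_1$ using $h_0(T_2\circ T_1)=0$. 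Your route would be slightly cleaner, since once $h_t(\mathbb{S}_R)$ is pinned down exactly, \autoref{thm:entropy-twist-spherical-functor}(1) forces equality whenever $h_t(C_f[2])\geq 0$, and the $\mcr{T}=D(T)^c$ hypothesis and the continuity argument become unnecessary — so your closing sentence invoking \autoref{prop:bound-entropies} for $T_2\circ T_1$ itself is redundant. The gap you flag (the graded refinement tracking how $H^\bullet\bigl((R^\ast)^{\otimes_R m}\bigr)$ spreads across cohomological degrees, giving slopes $1\pm w$) is real and is precisely what the paper outsources to \cite[Proposition 8.4]{Elagin-examples-dimensions}; you should either carry it out or cite that result, but your intuition about what it must say, and the check that it is insensitive to the auxiliary shift of $E_1$, is correct.
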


\begin{proof}
	For the statement about $w = 0$ notice that in such case we can always shift $V$ so that $R$ is the endomorphism algebra of the tilting bundle $\ca{O} \oplus \ca{O}(-1)$ on $\mb{P}^1$. In particular, we get $D(R)^c \simeq D^b(\mb{P}^1)$, and the entropy of the Serre functor is $t$.
	In this case, \autoref{lem:inequalities} applies for any $d \geq 1$.

	When $w \neq 0$ we will need to appeal to \autoref{prop:bound-entropies} and continuity of entropy, see \cite[Theorem 2.1.6]{Shifting-numbers}, and for this reason we need to restrict to $\mcr{T} = D(T)^c$ for $T$ a compact and smooth dg algebra.

	All the statements follow easily from \autoref{lem:entropy-at-0-Serre}, \autoref{prop:bound-entropies}, and \autoref{lem:asymptotic-behaviour} below.
\end{proof}

\begin{lem}
	\label{lem:asymptotic-behaviour}
	If $\mcr{T} = D(T)^c$ for a compact, smooth dg algebra, $\dim V \geq 2$, and $\mathrm{Hom}_{D^b(k)}(V,V[d]) = 0$, we have
	\[
	\begin{array}{c}
	\displaystyle{\lim_{t \to -\infty} \frac{h_t(T_{2} \circ T_{1})}{t}} = 2 - (d+w) \quad \text{if} \; d+w \geq 2,
	\\
	\displaystyle{\lim_{t \to +\infty} \frac{h_t(T_{2} \circ T_{1})}{t}} = 2 - (d-w) \quad \text{if} \; d-w \leq 2,
	\end{array}
	\]
	and they are $\geq 0$ and $\leq 0$ otherwise.
	Here we set $w = \max V - \min V$.
\end{lem}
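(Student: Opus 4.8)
The plan is to transfer the question to the Serre functor of $R$ via \autoref{thm:entropy-twist-spherical-functor}, and then to compute the asymptotic slopes of $h_t(\mb{S}_R)$ by a degree‑graded refinement of the cohomological count behind \autoref{lem:entropy-at-0-Serre}.

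\emph{Reduction to $\mb{S}_R$.} After replacing $E_1$ by $E_1[n]$ for $n\gg0$ as in \autoref{lem:inequalities}, \autoref{thm:composition-twists} gives a spherical functor $f$ with $T_f\simeq T_2\circ T_1$ and $C_f[2]\simeq\mb{S}_R[1-d]$, for $R=k\oplus k\oplus V$ with $V$ suitably shifted (which leaves $w=\max V-\min V$ unchanged). Part (1) of \autoref{thm:entropy-twist-spherical-functor} then yields, for all $t$,
\[
  h_t(\mb{S}_R)+(1-d)t\ \le\ h_t(T_2\circ T_1)\ \le\ \max\bigl\{0,\ h_t(\mb{S}_R)+(1-d)t\bigr\}.
\]
Since $\mcr{T}=D(T)^c$ and $R$ is smooth, proper and finite dimensional, \autoref{prop:bound-entropies} applies to $\mb{S}_R$ and to $T_2\circ T_1$, so $h_t(\mb{S}_R)=\tau^{\pm}(\mb{S}_R)\,t+O(1)$ and $h_t(\mb{S}_R)\le h_0(\mb{S}_R)+\tau^{\pm}(\mb{S}_R)\,t$ as $t\to\pm\infty$. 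Feeding this into the sandwich and dividing by $t$ (a short case analysis of which argument realises $\max\{0,\cdot\}$) shows: $\tau^-(T_2\circ T_1)=\tau^-(\mb{S}_R)+(1-d)$ when the right‑hand side is $\le0$, and $\tau^-(T_2\circ T_1)\ge0$ otherwise; symmetrically, $\tau^+(T_2\circ T_1)=\tau^+(\mb{S}_R)+(1-d)$ when the right‑hand side is $\ge0$, and $\tau^+(T_2\circ T_1)\le0$ otherwise. Thus the lemma reduces to the identities $\tau^{+}(\mb{S}_R)=1+w$ and $\tau^{-}(\mb{S}_R)=1-w$, and the case splits $d\pm w\gtrless2$ are precisely $\tau^{\mp}(\mb{S}_R)+(1-d)\gtrless0$.

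\emph{The extreme degrees of $\mb{S}_R^m(R)$.} By \cite[Theorem 2.6]{DHKK-Dynamical-systems}, $h_t(\mb{S}_R)=\lim_m\tfrac1m\log\bigl(\sum_j\dim H^j(\mb{S}_R^m(R))\,e^{-jt}\bigr)$. Writing $j_{\min}(m)\le j_{\max}(m)$ for the least and greatest $j$ with $H^j(\mb{S}_R^m(R))\neq0$, bounding the sum below by its $j=j_{\min}(m)$ term and above by (number of terms)$\cdot$(total dimension)$\cdot e^{-j_{\min}(m)t}$, and letting $m\to\infty$ and then $t\to+\infty$ (the total dimension grows only at rate $e^{h_0(\mb{S}_R)}$, beaten by $e^{|j_{\min}(m)|t}$ for $t$ large), one gets $\tau^+(\mb{S}_R)=-\lim_m j_{\min}(m)/m$, and likewise $\tau^-(\mb{S}_R)=-\lim_m j_{\max}(m)/m$. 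Now $R$ is the graded path algebra of the quiver with two vertices and $\dim V$ arrows placed in the internal degrees of $V$, hence hereditary, and for any $R$-module $M$ without projective summands the Nakayama functor vanishes ($[\mb{S}_R(M)]=-[\tau_{\mathrm{cl}}M]$ together with $\mathrm{gl.dim}\,R=1$ forces $\nu M=0$), so $\mb{S}_R(M)\simeq\tau_{\mathrm{cl}}(M)[1]$ for such $M$. As $\mb{S}_R(R)\simeq I_1\oplus I_2$ (the indecomposable injectives, which for $\dim V\ge2$ have no projective summands), iteration gives $\mb{S}_R^m(R)\simeq\tau_{\mathrm{cl}}^{\,m-1}(I_1\oplus I_2)[m-1]$; hence $j_{\min}(m)$ and $j_{\max}(m)$ are $-(m-1)$ plus the lowest and highest internal degree of the graded module $\tau_{\mathrm{cl}}^{\,m-1}(I_1\oplus I_2)$.

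\emph{The internal spread.} It then remains to show that $\tau_{\mathrm{cl}}^{\,m-1}(I_1\oplus I_2)$ occupies an interval of internal degrees $[\,-wm+O(1),\ wm+O(1)\,]$, which yields $\lim j_{\min}(m)/m=-(1+w)$ and $\lim j_{\max}(m)/m=w-1$, i.e.\ $\tau^{\pm}(\mb{S}_R)=1\pm w$. The projectives $e_iR$ and injectives $D(Re_i)$ occupy internal windows of width $w$, and computing $\tau_{\mathrm{cl}}=D\circ\mathrm{Tr}$ on a minimal projective presentation shows that each application of $\tau_{\mathrm{cl}}$ lowers the bottom and raises the top of the internal support by $w$, up to a bounded error and without net drift of the centre (since $\tau_{\mathrm{cl}}$ commutes with internal shifts), while the total dimension satisfies \eqref{recurrence-eq} and so grows by $\sigma_+^{2}$ (linearly when $\dim V=2$); equivalently this is the graded enhancement of \cite[Lemma 8.2]{Elagin-examples-dimensions}, in which the pieces $\psi_m(W)$, read as graded vector spaces, occupy a degree window of width $mw+O(1)$. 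I expect this last step to be the main obstacle: one has to pin the drift rates of the extreme internal degrees down to exactly $\mp w$, ruling out systematic cancellation at the boundary of the degree window that would slow $j_{\min}(m)$ below $-(1+w)m$ or speed $j_{\max}(m)$ above $(w-1)m$ — which means either unwinding the graded form of \cite[Lemma 8.2]{Elagin-examples-dimensions} carefully, or doing the graded Auslander–Reiten bookkeeping by hand. Feeding $\tau^{\pm}(\mb{S}_R)=1\pm w$ back into the sandwich of the first step then gives \autoref{lem:asymptotic-behaviour}.
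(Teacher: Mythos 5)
Your reduction to the Serre functor of $R$ is exactly the paper's: you invoke \autoref{thm:composition-twists}, \autoref{lem:inequalities} and part (1) of \autoref{thm:entropy-twist-spherical-functor} to get the sandwich $h_t(\mb{S}_R)+(1-d)t \leq h_t(T_2\circ T_1) \leq \max\{0,\,h_t(\mb{S}_R)+(1-d)t\}$, and you correctly identify that the lemma then reduces to the two identities $\tau^\pm(\mb{S}_R)=1\pm w$. Your way of finishing the case analysis through \autoref{prop:bound-entropies} is a perfectly sound variant of the paper's appeal to continuity of $t\mapsto h_t$ (the paper uses \cite[Theorem 2.1.6]{Shifting-numbers} at this point); both routes correctly yield equality when the sign is right and only a one-sided inequality otherwise.

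The genuine gap is precisely the one you flag yourself. You attempt to establish $\tau^\pm(\mb{S}_R)=1\pm w$ from scratch by iterating the classical Auslander--Reiten translate on $I_1\oplus I_2$ and tracking the window of internal degrees, but the crucial step --- pinning the drift of the extreme internal degrees to exactly $\mp w$ per application of $\tau_{\mathrm{cl}}$, and ruling out cancellation at the edges of the window --- is not carried out; you explicitly say you ``expect this last step to be the main obstacle.'' This is not a cosmetic omission: without it the sandwich produces nothing, since all the content of the lemma sits in those two slope values. The paper does not re-derive them; it simply cites \cite[Proposition 8.4]{Elagin-examples-dimensions}, which computes the upper and lower Serre dimensions of $D(R)^c$ for $R=k\oplus k\oplus W$ and gives precisely $\lim_{t\to\pm\infty}h_t(\mb{S}_R)/t = 1\pm w$. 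So: your framework, the reduction, and the bookkeeping around \autoref{prop:bound-entropies} are all in order, but you need to either complete the graded AR computation you sketch or, as the paper does, cite Elagin's Proposition~8.4 for the Serre-dimension slopes. As written, the proposal identifies but does not close the key step.
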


\begin{proof}
	The assumptions, together with \autoref{thm:entropy-twist-spherical-functor} and \autoref{lem:inequalities}, imply that\footnote{Here $R$ depends on how much we shift $E_1$, but the limit won't, so we drop the dependence on $n$.}
	\[
		h_t(T_{2} \circ T_{1}) = h_t(\mb{S}_R) + (1-d)t
	\]
	as long as the right hand side is bigger or equal than $0$, and $h_t(T_{2} \circ T_{1}) \leq 0$ otherwise.
	Dividing the right hand side by $t$ and passing to the limit for $t \to -\infty$, by \cite[Proposition 8.4]{Elagin-examples-dimensions} we see that the right hand side tends to $2 - (d+w)$.
	If $d + w \geq 2$ by continuity of the entropy of an endofunctor we get that there exists an $N < 0$ such that the right hand side above is positive in $(-\infty, N)$.
	Hence, the equality holds in this interval, and the statement follows dividing by $t$ and passing to the limits.
	If $d+w < 2$ then in $(-\infty, N)$ the right hand side is negative and we only get the inequality.

	Similarly one proves the statement for $t \to +\infty$.
\end{proof}

\section{Counterexamples to Kikuta--Takahashi}



In this section, using \autoref{thm:entropy-composition-1}, we will produce new counterexamples to Kikuta--Takahashi's conjecture, \cite{Kikuta-Takahashi-On-cat-entropy}.
In particular, we will produce the first counterxamples in odd dimension.

Let $\mathscr{T}$ be a $k$-linear, proper, dg enhanced triangulated category with a Serre functor and a split generator, and let $K(\mathscr{T})$ be its Grothendieck group.
The {\em Euler form} $\chi : K(\mathscr{T}) \times K(\mathscr{T}) \to \mathbb{Z}$ is defined by
\begin{equation*}
	\chi([E],[F]) = \sum_{i \in \mathbb{Z}} (-1)^i \dim\mathrm{Hom}_\mathscr{T}(E,F[i]).
\end{equation*}
We define the {\em numerical Grothendieck group} $K_\mathrm{num}(\mathscr{T})$ as\footnote{Notice that the existence of a Serre functor implies that the right and left radical of $\chi$ agree, so there is no ambiguity in the definition of $K_\mathrm{num}(\mcr{T})$.}
\begin{equation*}
	K_\mathrm{num}(\mathscr{T}) = K(\mathscr{T})/\langle [E] \in K(\mathscr{T}) \,|\, \chi([E],-) = 0 \rangle.
\end{equation*}
Note that the induced Euler form $\chi : K_\mathrm{num}(\mathscr{T}) \times K_\mathrm{num}(\mathscr{T}) \to \mathbb{Z}$ is non-degenerate.
In this section, we only consider triangulated categories whose numerical Grothendieck groups are of finite rank.

\begin{cor}
	\label{cor:examples}
	Let $E_1,E_2 \in \mathscr{T}$ be $d$-spherical objects and $V = \mathrm{Hom}_\mathscr{T}^\bullet(E_2,E_1)$.
	Suppose $[E_1],[E_2]$ are non-zero and linearly independent in $K_\mathrm{num}(\mathscr{T})$, that $\mathrm{Hom}_{D^b(k)}(V,V[d]) = 0$, and that $\chi([E_2],[E_1]) \neq \pm 2$ if $d$ is even, $\chi([E_2], [E_1]) \neq 0$ if $d$ is odd.
	If $\dim V = 0,1,2$, then
	\[
		h_0(T_2 \circ T_1) = \log\rho([T_2 \circ T_1]) = 0,
	\]
	and if $\dim V \geq 3$, then
	\[
		h_0(T_2 \circ T_1) \geq \log\rho([T_2 \circ T_1]),
	\]
	where the equality holds if and only if $\chi([E_2],[E_1]) = \pm \dim V$.
\end{cor}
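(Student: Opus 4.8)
The plan is to compute both sides of the conjectural equality separately and compare. The left-hand side $h_0(T_2 \circ T_1)$ is already known from \autoref{thm:entropy-composition-1}: it is $0$ when $\dim V \le 2$ and $\log\bigl(\tfrac{(\dim V)^2 - 2 + \sqrt{(\dim V)^4 - 4(\dim V)^2}}{2}\bigr)$ when $\dim V \ge 3$. So the real work is to compute $\rho([T_2 \circ T_1])$, the spectral radius of the induced automorphism of $K_{\mathrm{num}}(\mathscr{T})$, and then check the three claims (vanishing when $\dim V \le 2$; $\ge$ always when $\dim V \ge 3$; equality iff $\chi([E_2],[E_1]) = \pm \dim V$).

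First I would record the action of a single spherical twist on $K_{\mathrm{num}}$: for a $d$-spherical object $E$ one has $[T_E](x) = x - \chi([E],x)\,[E]$ when $d$ is even (since $\chi([E],[E]) = 1 + (-1)^d = 2$) and $[T_E](x) = x + \chi([E],x)\,[E]$... — more carefully, $\chi([E],[E]) = 1 + (-1)^d$, so when $d$ is odd $\chi([E],[E]) = 0$ and the twist is a symplectic transvection, while when $d$ is even it is $2$ and one gets a reflection-type formula. Restricting to the rank-$2$ sublattice $L = \mathbb{Z}[E_1] \oplus \mathbb{Z}[E_2]$ (which is preserved by both $[T_1]$ and $[T_2]$, and on whose orthogonal complement both act trivially), the composite $[T_2\circ T_1]$ becomes an explicit $2\times 2$ integer matrix $M$ whose entries are built from $\chi([E_1],[E_1])$, $\chi([E_2],[E_2])$, $\chi([E_2],[E_1])$ and $\chi([E_1],[E_2])$. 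Here I would use the Serre-duality relation $\chi([E_2],[E_1]) = (-1)^d \chi([E_1],[E_2])$ together with $\mathbf{S}_{\mathscr{T}} E_i \simeq E_i[d]$ to reduce everything to the single quantity $n := \chi([E_2],[E_1])$ (up to sign bookkeeping in the odd case), and I would split into $d$ even versus $d$ odd. In each case $\rho(M)$ is determined by $\mathrm{tr}(M)$ and $\det(M)$; since $[T_1],[T_2]$ are isometries of $\chi$ one expects $\det M = 1$, so $\rho(M)$ is the larger root of $\lambda^2 - (\mathrm{tr}\,M)\lambda + 1 = 0$, hence $\rho(M) = 1$ (i.e. $\log\rho = 0$) exactly when $|\mathrm{tr}\,M| \le 2$, and otherwise $\rho(M) = \tfrac{|\mathrm{tr}\,M| + \sqrt{(\mathrm{tr}\,M)^2 - 4}}{2}$.

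The computation of $\mathrm{tr}\,M$ as a function of $n$ is the crux. I expect to find $\mathrm{tr}\,M = n^2 - 2$ (up to sign, in the relevant parity) so that $\rho([T_2\circ T_1])$ equals $1$ when $|n| \le 1$ — but one must be careful: $n = 0$ in the even case and $n = \pm 1$ give $|\mathrm{tr}\,M| \le 2$, and the hypotheses of the corollary ($\chi([E_2],[E_1]) \ne \pm 2$ if $d$ even, $\ne 0$ if $d$ odd, and $[E_1],[E_2]$ linearly independent) are exactly what is needed to rule out the degenerate borderline cases where $M$ is unipotent-but-not-identity or where the sublattice $L$ degenerates. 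Once $\mathrm{tr}\,M = \pm(n^2-2)$ is in hand, I compare with \autoref{thm:entropy-composition-1}: when $\dim V \le 2$, $\log\rho \le 0$ but the conjectural RHS is a log of $\rho \ge 1$ so it is $\ge 0$, forcing $\log\rho = 0 = h_0$ (using that $|n| = |\chi([E_2],[E_1])| \le \dim V$, since $\chi$ is an alternating sum of the graded pieces of $V$ whose total dimension is $\dim V$); when $\dim V \ge 3$, I note $|n| \le \dim V$ always gives $\rho([T_2\circ T_1]) \le \tfrac{(\dim V)^2 - 2 + \sqrt{(\dim V)^4 - 4(\dim V)^2}}{2}$, which is exactly $e^{h_0}$, yielding the inequality $h_0 \ge \log\rho$; and equality holds precisely when $|n| = \dim V$, i.e. when $\chi([E_2],[E_1]) = \pm\dim V$ — which forces $V$ to be concentrated in degrees of a single parity. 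The main obstacle will be the sign and parity bookkeeping in expressing $\mathrm{tr}\,M$ cleanly in both the even and odd $d$ cases and checking that the excluded values in the hypotheses are exactly the ones where $M$ fails to be diagonalizable with the expected spectral radius; the linear-independence hypothesis on $[E_1],[E_2]$ is what guarantees $M$ genuinely acts on a rank-$2$ lattice rather than collapsing.
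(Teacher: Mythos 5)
Your proposal follows essentially the same route as the paper: compute $h_0(T_2\circ T_1)$ from \autoref{thm:entropy-composition-1}, observe that $[T_2\circ T_1]$ preserves the rank-$2$ piece spanned by $[E_1],[E_2]$ and acts as the identity on its $\chi$-orthogonal complement (here the paper uses the hypothesis $\chi([E_2],[E_1])\neq\pm2$, resp.\ $\neq 0$, to guarantee $\chi$ is non-degenerate on that rank-$2$ piece so that $K_{\mathrm{num}}$ splits), write down the resulting $2\times 2$ block, and compare its spectral radius with $h_0$ via the bound $|\chi([E_2],[E_1])|\leq\dim V$ and monotonicity of $x\mapsto\log\tfrac{x-2+\sqrt{x^2-4x}}{2}$ for $x\geq 4$. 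The only cosmetic difference is that you reason with $\mathrm{tr}\,M$ and $\det M=1$ while the paper records the explicit matrix $A=(-1)^{1-d}\left(\begin{smallmatrix}1&\lambda\\ -\lambda&1-\lambda^2\end{smallmatrix}\right)$; your anticipated value $\mathrm{tr}\,M=\pm(\lambda^2-2)$ and the resulting $\rho=\bigl|\tfrac{\lambda^2-2+\sqrt{\lambda^4-4\lambda^2}}{2}\bigr|$ agree with the paper's computation.
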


\begin{proof}
	First of all, notice that as $[E_1],[E_2]$ are assumed to be non-zero and linearly independent the subspace $W:= k \{[E_1],[E_2]\}$ is two dimensional.
	Moreover, the fact that $[E_1],[E_2]$ are $d$-spherical implies $W^{\perp} = {}^{\perp} W$.

	The assumptions on $\chi([E_2], [E_1])$ imply that the restriction of $\chi$ to this subspace is non-degenerate.
	Hence, by what we said above, we get a basis of $K_\mathrm{num}(\mathscr{T})$ by taking $[E_1]$,$[E_2]$ and a basis of $k \{[E_1], [E_2]\}^{\perp}$.

	By definition, we have
	\[
		[T_i](v) = v - \chi([E_i],v)[E_i] \quad v \in K_\mathrm{num}(\mathscr{T}).
	\]
	Denote $\lambda = \chi([E_2],[E_1])$.
	Then, with respect to the previously chosen basis,
	\[
		[T_2 \circ T_1] =
		\left(
		\begin{array}{cc}
			A & 0\\
			0 & \mathrm{Id}
		\end{array}
		\right)
	\]
	where
	\[
		A = (-1)^{1-d}
		\left(
		\begin{array}{cc}
			1 & \lambda\\
			-\lambda & 1 - \lambda^2
		\end{array}
		\right).
	\]

	The eigenvalues are
	\[
		\displaystyle{\frac{\lambda^2 -2 \pm \sqrt{\lambda^4 - 4\lambda^2}}{2}},
	\]
	and then logarithm of the spectral radius is
	\[
		\log\rho([T_2 \circ T_1]) = 
		\log\left| \frac{\lambda^2-2 + \sqrt{\lambda^4-4\lambda^2}}{2} \right|
	\]

	By \autoref{thm:entropy-composition-1}, we have\footnote{Notice that here we are taking the absolute value of (possibly) a complex number.}
	\[
		h_0(T_2 \circ T_1) = \displaystyle{\log \left| \frac{(\dim V)^2 -2 + \sqrt{(\dim V)^4 - 4(\dim V)^2}}{2} \right|}.
	\]

	This shows the statement for $\dim V = 0,1,2$.
	As the function
	\[
		x \mapsto \displaystyle{\log \left( \frac{x -2 + \sqrt{x^2 - 4x}}{2} \right)}
	\]
	is injective on $x \geq 4$, we also get the statement for $\dim V \geq 3$.
\end{proof}

\begin{ex}
	\label{ex:KT-PxP}
	Consider $\mb{P}^n \times \mb{P}^{m}$ with either
	\begin{enumerate}
		\item $n \geq 3$, $n$ odd, $m \geq 2$, $m$ even;
		\item $n,m \geq 2$, $n,m$ even.
	\end{enumerate}
	Take $X$ to be the zero locus of a section of $\ca{O}_{\mb{P}^n \times \mb{P}^m}(n+1,m+1)$.
	Then, from the exact sequence
	\[
		\begin{tikzcd}
			\ca{O}_{\mb{P}^n \times \mb{P}^m}(-n-1,-m-1) \ar[r] & \ca{O}_{\mb{P}^n \times \mb{P}^m} \ar[r] & \ca{O}_X
		\end{tikzcd}
	\]
	we see that $X$ is a true Calabi--Yau manifold of dimension $m+n-1$.
	In particular, line bundles on $X$ are $d := m+n-1$ spherical objects.

	Consider $\ca{L} = \ca{O}_X(n+1,0)$.
	Then, from the above exact sequence we see that
	\[
		R\Gamma(\ca{L}) \simeq k^{N} \oplus k[-m+1] \quad N = \left( \begin{array}{c} 2n+1 \\ n+1\end{array}\right)
	\]
	In particular, if we set $V = \text{RHom}_{X}(\ca{O}_X, \ca{L})$, we have: $\dim V = N +1$, $\lambda = N - 1 > 2$ in case (1) ($m$ is even and $n \geq 3$), and $\lambda = N - 1 > 0$ in case (2) ($m$ is even and $n \geq 2$).
	Moreover, we have $\max V - \min V = m-1 < d$, and therefore $V$ doesn't have a gap of $d$.

	As the line bundles $\ca{O}_X$ and $\ca{L}$ have linearly independent Mukai vectors,\footnote{They are line bundles with different first Chern class.} \autoref{cor:examples} applies.
	In particular, we get
	\[
		\begin{aligned}
			\log \left( \rho ( T_{\ca{O}_X}^H \circ T_{\ca{L}}^H )\right) = &\log \left( \frac{(N-1)^2 - 2 + \sqrt{(N-1)^4 - 4(N-1)^2}}{2} \right)\\
			< &\log \left( \frac{(N+1)^2 - 2 + \sqrt{(N+1)^4 - 4(N+1)^2}}{2} \right) = h_{0} \left( T_{\ca{O}_X} \circ T_{\ca{L}} \right),
		\end{aligned}
	\]
	thus contradicting Kikuta--Takahashi's conjecture.
\end{ex}

\subsection{\texorpdfstring{$A_2$ Ginzburg dg algebra}{A2 Ginzburg}}
\label{subs:A_2}

The {\em $d$-Calabi--Yau Ginzburg dg algebra} $\Gamma_2^d$ associated to the $A_2$ quiver is defined as follows.
First, as a graded algebra, it is the path algebra of the graded quiver with two vertices $\{1,2\}$ and four arrows: $a : 1 \to 2$ in degree $0$, $a^* : 2 \to 1$ in degree $2-d$ and $t_i : i \to i$ ($i = 1,2$) in degree $1-d$.
The differential is given by $da = da^* = 0$, $dt_1 = aa^*$ and $dt_2 = -a^*a$.

Let $\mathscr{D}_2^d$ be the derived category of dg $\Gamma_2^d$-modules with finite dimensional cohomology.
It is known that $\mathscr{D}_2^d$ is $d$-Calabi--Yau category and the simple modules $S_1,S_2$ are spherical objects such that $V = \mathrm{Hom}^\bullet(S_2,S_1) = \mathbb{C}[1-d]$.
Denote by $T_1,T_2$ the spherical twists around them; we obtain a braid group action via $\mathrm{Br}_3 = \langle \sigma_1,\sigma_2 \rangle \ni \sigma_i \mapsto T_i$.
We call an object a {\em reachable spherical object} if it is isomorphic to an object $\sigma S_i$ for some $\sigma \in \mathrm{Br}_3$ and $i=1,2$.
For two reachable spherical objects $E_1,E_2$, the Poincar\'e polynomial of $\mathrm{Hom}_{\mathscr{D}_2^d}^\bullet(E_2,E_1)$, {\it i.e.}
\[
p(E_2,E_1) = \sum_{n \in \mathbb{Z}} \dim \mathrm{Hom}_{\mathscr{D}_2^d}(E_2,E_1[n]) q^n
\]
coincides with a weighted intersection number of some arcs on the disk with 3 marked points, \cite{Khovanov-Seidel01}.
Let us recall the precise statement.

Let $(D,\Delta)$ be the unit disk $D$ with 3 marked points $\Delta = \{ p_1,p_2,p_3 \} \subset D$.
A {\em closed arc} in $(D,\Delta)$ is an embedding $c : [0,1] \to D$ such that $c^{-1}(\Delta) = \{0,1\}$.
Define $P = \mathbb{P}(T(D \setminus \Delta))$ to be the real projectivization of the tangent bundle of $D \setminus \Delta$.
By considering an oriented trivialization of $D$, we can identify $P$ with $\mathbb{RP}^1 \times (D \setminus \Delta)$.
For each $p_i$, take a small loop $\lambda_i$ winding $p_i$ positively once.
Then $[\mathrm{pt} \times \lambda_i]$ and $[\mathbb{RP}^1 \times \mathrm{pt}]$ form a basis of $H_1(P,\mathbb{Z})$.
Define $\alpha \in H^1(P,\mathbb{Z}^2)$ by $\alpha([\mathrm{pt} \times \lambda_i]) = (-2,1)$ and $\alpha([\mathbb{RP}^1 \times \mathrm{pt}]) = (1,0)$.
Let $\tilde{P}$ be the covering space with covering group $\mathbb{Z}$ determined by $\alpha$.
A {\em bigraded closed arc} $(c,\tilde{c})$ (or $\tilde{c}$ for short) in $(D,\Delta)$ is a closed arc $c$ in $(D,\Delta)$ together with a lift $\tilde{c} : (0,1) \to \tilde{P}$ of the section $s_c : (0,1) \to P$ given by $s_c(t) = T_{c(t)}c$.

Let $\tilde{c}_0,\tilde{c}_1$ be bigraded closed arcs having minimal intersection in the sense that they intersect transversely and do not bound a disk.
We shall define a bigrading of an intersection point $z \in c_0 \cap c_1$.
Take a small loop $l$ around $z$ and an arc $a : [0,1] \to l \subset D$ which moves clockwise along $l$ and $a^{-1}(c_i) =\{i\}$ for $i=0,1$.
Let us also take a path $\pi : [0,1] \to P$ such that $\pi(t) \in P_{a(t)}$ for all $t$, $\pi(i) = T_{a(i)}c_i$ for $i=0,1$ and $\pi(t) \neq T_{a(t)}l$ for all $t$.
Let $\tilde{\pi} : [0,1] \to \tilde{P}$ be the lift of $\pi$ with $\tilde{\pi}(0) = \tilde{c}_0(a(0))$.
Then we have $\tilde{c}_1(a(1)) = (\mu_1,\mu_2) \cdot \tilde{\pi}(1)$ for a unique $(\mu_1,\mu_2) \in \mathbb{Z}^2$ which acts as a covering transformation.
In this case, we denote $(\mu_1(z),\mu_2(z)) = (\mu_1,\mu_2)$ and define the {\em bigraded intersection number} of $\tilde{c}_0$ and $\tilde{c}_1$ to be
\[
I(\tilde{c}_0,\tilde{c}_1) = (1+q_1^{-1}q_2) \sum_{z \in (c_0 \cap c_1) \setminus \Delta} q_1^{\mu_1(z)}q_2^{\mu_2(z)} + \sum_{z \in c_0 \cap c_1 \cap \Delta} q_1^{\mu_1(z)}q_2^{\mu_2(z)} \in \mathbb{Z}[q_1^{\pm 1},q_2^{\pm 1}].
\]

It was shown in \cite{Khovanov-Seidel01} that the behavior of reachable spherical objects of $\mathscr{D}_2^d$ can be read off from the topology of bigraded closed arcs in $(D,\Delta)$.
More precisely, there are some bigraded closed arcs $\tilde{b}_1,\tilde{b}_2$ and a braid group action $\mathrm{Br}_3 = \langle \sigma_1,\sigma_2 \rangle \ni \sigma_i \mapsto t_i$, where $t_i$ is the half twist around $b_i$, satisfying
\[
p(\sigma S_i,\tau S_j) = I(\sigma \tilde{b}_i,\tau \tilde{b}_j)|_{q_1=q,q_2=q^d}\footnote{We only need the existence of such bigraded closed arcs, for their explicit description see \cite{Khovanov-Seidel01}.}
\]
for any $\sigma,\tau \in \mathrm{Br}_3$ and $i,j=1,2$.

\begin{cor}\label{cor:ginzburg}
	Let $E_1,E_2$ be reachable spherical objects  which are non-isomorphic to each other.
	Then self extensions of $V = \mathrm{Hom}_{\mathscr{D}_2^d}^\bullet(E_2,E_1)$ have degree of the form $k(d-1)$ for some $k \geq 0$.
	In particular, we have
	\[
		h_0(T_2 \circ T_1) = \displaystyle{\log \left| \frac{(\dim V)^2 -2 + \sqrt{(\dim V)^4 - 4(\dim V)^2}}{2} \right|}.
	\]
	Moreover,
	\[
		h_0(T_2 \circ T_1) = \log\rho([T_2 \circ T_1])
	\]
	holds if and only if $\dim V = 1,2$ or $\dim V \geq 3$ and $d$ is odd.
\end{cor}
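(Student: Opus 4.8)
The plan is to deduce the corollary from \autoref{cor:examples} together with the bigraded‑arc description of $p(E_2,E_1)$ recalled above. First I would record the linear algebra: $K_{\mathrm{num}}(\mathscr{D}_2^d)=\mathbb{Z}[S_1]\oplus\mathbb{Z}[S_2]$ with $\chi([S_i],[S_i])=1+(-1)^d$, $\chi([S_1],[S_2])=-1$ and $\chi([S_2],[S_1])=(-1)^{d-1}$, so that $\chi$ is the $A_2$ Cartan form for $d$ even and a non‑degenerate alternating form for $d$ odd, and $[T_i]$ is the reflection, respectively the transvection, in $[S_i]$. Hence every reachable spherical object has non‑zero class, and for $E_1,E_2$ as in the statement, writing $\lambda:=\chi([E_2],[E_1])$, one has $\lambda=p(E_2,E_1)(-1)$ and $\dim V=p(E_2,E_1)(1)$. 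I would also dispose right away of the degenerate case in which the underlying arcs of $E_1,E_2$ coincide, i.e. $E_2\cong E_1[m]$: then $T_2=T_{E_1}=T_1$, so $T_2\circ T_1=T_1^2$, $\dim V=\dim\mathrm{Hom}^\bullet(E_1,E_1)=2$, the displayed formula reduces to $\log 1=0=2\,h_0(T_1)$, and $[T_2\circ T_1]=[T_1]^2$ is unipotent, so $\log\rho=0=h_0$. From now on I assume the two underlying arcs are distinct.

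The heart of the argument is the structural claim that for such a pair
\[
p(E_2,E_1)(q)=q^{a}\bigl((1+q^{d-1})\,U(q^{d-1})+V_0(q^{d-1})\bigr)
\]
for some $a\in\mathbb{Z}$, a polynomial $U$ with non‑negative integer coefficients, and $V_0$ a sum of at most two monomials with coefficient $1$ (geometrically: $U$ records the interior intersection points and $V_0$ the one or two points on $\Delta$ --- there is always at least one, since any two distinct arcs in the $3$‑punctured disk share an endpoint); for $d=2$ one moreover needs $\max V-\min V\le 1$. Granting this, $V=\mathrm{Hom}^\bullet_{\mathscr{D}_2^d}(E_2,E_1)$ is supported in $a+(d-1)\mathbb{Z}$, hence $\mathrm{RHom}_k(V,V)$ is supported in $(d-1)\mathbb{Z}$ --- which is the first assertion --- and $\mathrm{Hom}_{D^b(k)}(V,V[d])=0$, since $d\not\equiv 0\pmod{d-1}$ for $d\ge 3$ and directly for $d=2$. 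So \autoref{thm:entropy-composition-1} applies and yields the stated value of $h_0(T_2\circ T_1)$; in particular $\dim V=p(E_2,E_1)(1)\ge 1$.

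Proving the structural claim is the step I expect to be the real obstacle. By the Khovanov--Seidel correspondence $p(E_2,E_1)(q)=I(\sigma\tilde b_i,\tau\tilde b_j)|_{q_1=q,\,q_2=q^d}$; taking minimal‑position representatives (legitimate because the underlying arcs are distinct, so no empty bigon occurs), the substitution turns $1+q_1^{-1}q_2$ into $1+q^{d-1}$ and makes each intersection point $z$ contribute $q^{e(z)}$ with $e(z)=\mu_1(z)+d\,\mu_2(z)$. One then has to show all the $e(z)$ are congruent modulo $d-1$; I would do this by transporting the bigrading along either arc from one intersection point to an adjacent one, the change being $\alpha$ evaluated on the loop made of the two connecting sub‑arcs, and combining $\alpha([\mathrm{pt}\times\lambda_i])=(-2,1)$, $\alpha([\mathbb{RP}^1\times\mathrm{pt}])=(1,0)$ with the absence of empty bigons to force each such change into $(d-1)\mathbb{Z}$. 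The delicate points are the bigraded bookkeeping at the endpoints on $\Delta$ (where the contribution survives as an uncancelled monomial, so $\deg V_0$ must also be controlled) and the case $d=2$, where congruence mod $d-1$ is vacuous and one must instead argue, again from minimality, that all interior intersection points carry the same bigrading; for the latter I would rely on the explicit computations in \cite{Khovanov-Seidel01}.

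Finally, to pin down when $h_0=\log\rho([T_2\circ T_1])$ I would split on the parity of $d$ through the value of the prefactor at $q=-1$. If $d$ is odd then $q^{d-1}\mapsto 1$, so $\lambda=p(E_2,E_1)(-1)=\pm\,p(E_2,E_1)(1)=\pm\dim V\ne 0$; for a non‑degenerate alternating form this forces $[E_1],[E_2]$ linearly independent and provides the side condition $\lambda\ne 0$, so \autoref{cor:examples} gives $h_0=\log\rho$ for every $\dim V$. If $d$ is even then $q^{d-1}\mapsto-1$: the prefactor $(1+q^{d-1})$ kills the interior contributions, so $\lambda=\pm V_0(-1)$ and $|\lambda|\le 2$. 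If $[E_1],[E_2]$ are linearly dependent, then $[E_2]=\pm[E_1]$ and, exactly as in the degenerate case, $[T_2\circ T_1]=[T_1]^2=\mathrm{id}$, so $\log\rho=0$ and $h_0=\log\rho$ iff $\dim V\le 2$. If they are linearly independent, then $\lambda\ne\pm 2$ --- otherwise the Gram matrix $\left(\begin{smallmatrix}2&\lambda\\\lambda&2\end{smallmatrix}\right)$ of $[E_1],[E_2]$ would be singular, contradicting non‑degeneracy of $\chi$ --- so \autoref{cor:examples} applies: for $\dim V\le 2$ it gives $h_0=\log\rho=0$, and for $\dim V\ge 3$ it gives $h_0\ge\log\rho$ with equality iff $\lambda=\pm\dim V$, which fails since $|\lambda|\le 2<\dim V$. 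Collecting the cases yields exactly: $h_0=\log\rho$ iff $\dim V\in\{1,2\}$, or $\dim V\ge 3$ and $d$ is odd.
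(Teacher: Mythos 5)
Your proposal follows the same overall strategy as the paper: reduce the first claim to showing that the degrees in $V$ occupy a single residue class modulo $d-1$ (the substitution $q_1=q$, $q_2=q^d$ turns $1+q_1^{-1}q_2$ into $1+q^{d-1}$), then deduce $\mathrm{Hom}_{D^b(k)}(V,V[d])=0$ and apply \autoref{thm:entropy-composition-1}; and obtain the ``iff'' statement by evaluating $p(E_2,E_1)$ at $q=-1$ and splitting on the parity of $d$, which controls whether $q^{d-1}\mapsto\pm1$, hence whether $\lambda=\pm\dim V$ (odd $d$) or $|\lambda|\le 2$ (even $d$).

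The genuine gap is in the structural claim itself. You correctly identify it as ``the real obstacle,'' but you do not carry it out, deferring to unspecified computations in Khovanov--Seidel. The paper proves it directly: it normalizes so that $c$ shares the endpoint $p_2$ with $b_2$, looks at consecutive intersection points $z_i$ along $b_2$, and verifies $\delta_i=\mu_1(z_{i+1})+d\mu_2(z_{i+1})-\mu_1(z_i)-d\mu_2(z_i)\in(d-1)\mathbb{Z}$ via a concrete four-case picture (Figure~1), where $\delta_1$ is $d-1$, $2(d-1)$, $-2(d-1)$, or $-3(d-1)$. Your plan of transporting the bigrading along subarcs and evaluating $\alpha$ on the resulting loop is exactly what those pictures encode, so your approach is right, but the step that actually closes the argument is not present in the proposal.

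On the other hand, your write-up is more careful than the paper's in a few edge cases, and these are worthwhile observations: the remark that for $d=2$ the mod-$(d-1)$ statement is vacuous and one must additionally supply $\max V-\min V\le 1$; the derivation $|\lambda|\le 2$ for even $d$ from the form of $V_0$, together with correct handling of the possibilities $\lambda=0$ and $[E_1],[E_2]$ linearly dependent before invoking \autoref{cor:examples} (the paper's footnote simply asserts $|\chi|\in\{1,2\}$). One caution on your treatment of the degenerate case $E_2\cong E_1[m]$: your computation correctly settles both displayed conclusions, but it does not address the first claim of the corollary (degrees of self-extensions of $V$ in $(d-1)\mathbb{Z}$), and indeed a direct computation there gives $V$ a degree gap of $d$, so that claim needs its own justification (or the case needs to be shown not to arise in minimal position) rather than being ``disposed of right away.''
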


\begin{proof}
For simplicity, we assume that the marked points on the disk are $p_1=(-\frac{1}{2},0),p_2=(0,0),p_3=(\frac{1}{2},0)$.
Without loss of generality, we can assume that $E_2 = S_2$ and a bigraded closed arc $\tilde{b}_2$ corresponding to it is the straight arc connecting $p_2$ and $p_3$.
Let $\tilde{c}$ be a bigraded closed arc correponding to $E_1$.
By twisting around $b_2$, we can assume $c$ has $p_2$ as one of its end points.
Denote the intersection points of $b_2$ and $c$ by $z_i = (a_i,0)$ where $0=a_1<a_2<\cdots<a_n\leq\frac{1}{2}$.

To prove the first claim, it is enough to prove that
\[
\delta_i = \mu_1(z_{i+1})+d\mu_2(z_{i+1}) - (\mu_1(z_i)+d\mu_2(z_i)) = k(d-1)
\]
for some $k \in \mathbb{Z}$ and for all $i$.
Here, we shall only show it for the case $i=1$ as the other cases can be shown similarly.
Up to twisting around $b_2$,\footnote{Note that twisting around $b_2$ doesn't change $\delta_i$.} we have four possibilities in that case which are depicted in Figure 1.
For each of four cases, $\delta_1$ is $d-1,2(d-1),-2(d-1)$ and $-3(d-1)$ respectively.

\begin{figure}[t]
\begin{tikzpicture}
\filldraw (-5.5,0) circle (2pt);
\filldraw (-3.5,0) circle (2pt);
\filldraw (-1.5,0) circle (2pt);
\draw (-3.5,0) -- (-1.5,0);
\draw (-3.5,0) .. controls (-4,1) and (-6.5,1) .. (-6.5,0) .. controls (-6.5,-1.5) and (-2.5,-1) .. (-2.5,0.5);

\filldraw (1.5,0) circle (2pt);
\filldraw (3.5,0) circle (2pt);
\filldraw (5.5,0) circle (2pt);
\draw (3.5,0) -- (5.5,0);
\draw (3.5,0) .. controls (3,1) and (0.5,1) .. (0.5,0) .. controls (0.5,-1.5) and (6.5,-1.5) .. (6.5,0) .. controls (6.5,1) and (4.5,1) .. (4.5,-0.5);

\filldraw (-5.5,-3) circle (2pt);
\filldraw (-3.5,-3) circle (2pt);
\filldraw (-1.5,-3) circle (2pt);
\draw (-3.5,-3) -- (-1.5,-3);
\draw (-3.5,-3) .. controls (-4,-4) and (-6.5,-4) .. (-6.5,-3) .. controls (-6.5,-1.5) and (-2.5,-2) .. (-2.5,-3.5);

\filldraw (1.5,-3) circle (2pt);
\filldraw (3.5,-3) circle (2pt);
\filldraw (5.5,-3) circle (2pt);
\draw (3.5,-3) -- (5.5,-3);
\draw (3.5,-3) .. controls (3,-4) and (0.5,-4) .. (0.5,-3) .. controls (0.5,-1.5) and (6.5,-1.5) .. (6.5,-3) .. controls (6.5,-4) and (4.5,-4) .. (4.5,-2.5);
\end{tikzpicture}
\caption{$\delta_1$ is $d-1$ (top left), $2(d-1)$ (top right), $-2(d-1)$ (bottom left) and $-3(d-1)$ (bottom right) respectively.}
\end{figure}
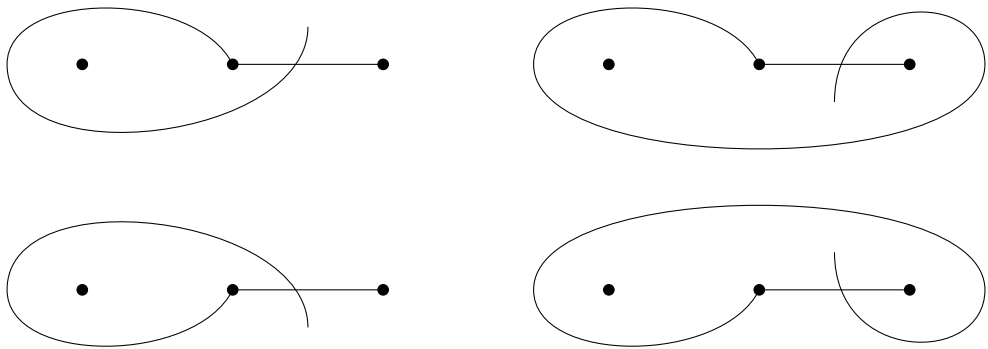

The second claim can be seen by noticing that $\chi([E_2],[E_1]) = \dim V$ when $d$ is odd while $\chi([E_2],[E_1]) = 1,2$ when $d$ is even.\footnote{When $d$ is even, the absolute value of $\chi([E_2],[E_1])$ is exactly the number of common end points of the closed arcs corresponding to $E_1,E_2$.}
\end{proof}

This example has the following symplecto-geometric interpretation.
Let $X_2^d$ be the Milnor fiber of $A_2$-singularity of dimension $2d>2$ and $L_1,L_2$ be the vanishing cycles (equipped with suitable grading structures).
It is known that $L_1,L_2$ split-generate the (split-closed) derived Fukaya category $D^\pi\mathcal{F}(X_2^d)$.
Since $S_1,S_2$ also split-generate the finite-dimensional derived category $\mathscr{D}_2^d$ and the graded algebra
\begin{equation*}
\bigoplus_{i,j=1}^2 \mathrm{Hom}^\bullet_{D^\pi\mathcal{F}(X_2^d)}(L_i,L_j) \cong \bigoplus_{i,j=1}^2 \mathrm{Hom}^\bullet_{\mathscr{D}_2^d}(S_i,S_j)
\end{equation*}
is intrinsically formal (see \cite[Lemma 4.21]{Seidel-Thomas01}), we have an exact equivalence
\begin{equation*}
D^\pi\mathcal{F}(X_2^d) \simeq \mathscr{D}_2^d.
\end{equation*}
In particular, under this equivalence, $L_i$ corresponds to $S_i$ and the Dehn twist $\tau_i$ around $L_i$ corresponds to the spherical twist $T_i$ around $S_i$.
Therefore, \autoref{cor:ginzburg} can be stated in terms of symplectic geometry.

\begin{cor}\label{cor:milnor}
Let $L_1,L_2$ be reachable Lagrangian spheres in $X_2^d$.
Then we have
\[
h_0(\tau_2 \circ \tau_1) = \displaystyle{\log \left| \frac{m^2 -2 + \sqrt{m^4 - 4m^2}}{2} \right|}
\]
where $m = \dim HF^\bullet(L_2,L_1)$.
\end{cor}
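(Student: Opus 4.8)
The plan is to deduce this corollary formally from \autoref{cor:ginzburg} via the equivalence $D^\pi\mathcal{F}(X_2^d) \simeq \mathscr{D}_2^d$ recalled just above its statement, using that categorical entropy is invariant under exact equivalences.

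I would first record the general principle: for any exact equivalence $\Psi : \mathscr{T} \xrightarrow{\sim} \mathscr{T}'$ of dg enhanced triangulated categories and any endofunctor $\Phi$ of $\mathscr{T}$, a split-generator $G$ of $\mathscr{T}$ is sent to a split-generator of $\mathscr{T}'$ and $\delta_t(\Psi G, (\Psi\Phi\Psi^{-1})^n\Psi G) = \delta_t(G,\Phi^n G)$, whence $h_t(\Psi\Phi\Psi^{-1}) = h_t(\Phi)$ for all $t$, in particular for $t=0$. Applying this to the equivalence $D^\pi\mathcal{F}(X_2^d) \simeq \mathscr{D}_2^d$, which by the discussion preceding the statement carries $L_i$ to $S_i$ and intertwines the Dehn twist $\tau_i$ with the spherical twist $T_i$ (following \cite{Seidel-Thomas01}), and using $\Psi(\tau_2\circ\tau_1)\Psi^{-1} \simeq (\Psi\tau_2\Psi^{-1})\circ(\Psi\tau_1\Psi^{-1}) \simeq T_2\circ T_1$, I obtain
\[
h_0(\tau_2\circ\tau_1) = h_0(T_2\circ T_1),
\]
where $T_1,T_2$ are now the spherical twists around the spherical objects $E_1,E_2 \in \mathscr{D}_2^d$ corresponding to $L_1,L_2$. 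Since the equivalence intertwines the two braid group actions $\sigma_i\mapsto\tau_i$ and $\sigma_i\mapsto T_i$, reachable Lagrangian spheres correspond exactly to reachable spherical objects, so \autoref{cor:ginzburg} applies to $E_1,E_2$ whenever they are non-isomorphic (the case $E_1\simeq E_2$, where $m=2$ and the right-hand side is $\log 1 = 0$, is degenerate and I would handle it separately without dwelling on it). Finally $HF^\bullet(L_2,L_1) = \mathrm{Hom}^\bullet_{D^\pi\mathcal{F}(X_2^d)}(L_2,L_1) \cong \mathrm{Hom}^\bullet_{\mathscr{D}_2^d}(E_2,E_1)$, so $m = \dim V$, and substituting $\dim V = m$ into the formula of \autoref{cor:ginzburg} gives the claim.

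The argument is thus essentially bookkeeping; the substantive inputs are quoted rather than reproved, namely the identification of the Dehn twist with the spherical twist under the equivalence and the compatibility of the two $\mathrm{Br}_3$-actions, which is what makes ``reachable'' meaningful simultaneously on both sides. The only point genuinely requiring care is matching the chosen grading structures on $L_1,L_2$ with the grading on $\mathscr{D}_2^d$ so that $HF^\bullet$ and $\mathrm{Hom}^\bullet$ agree as graded vector spaces; but since only the total dimension $m$ enters the final formula, even a global shift would be harmless here, so I do not expect a real obstacle.
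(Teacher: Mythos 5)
Your argument is exactly the one the paper intends: the corollary is obtained by transporting \autoref{cor:ginzburg} across the exact equivalence $D^\pi\mathcal{F}(X_2^d) \simeq \mathscr{D}_2^d$ constructed in the preceding paragraph, which matches $L_i$ with $S_i$ and $\tau_i$ with $T_i$ and hence intertwines the two $\mathrm{Br}_3$-actions, so that reachable Lagrangian spheres correspond to reachable spherical objects and $m = \dim HF^\bullet(L_2,L_1) = \dim V$. The extra bookkeeping you supply — invariance of categorical entropy under conjugation by an equivalence, and the degenerate case $L_1 \simeq L_2$ where $m=2$ and both sides vanish — is correct and only makes explicit what the paper leaves tacit.
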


\begin{rmk}\label{rmk:torelli}
Let $d$ be even and $\tilde{b}_1,\tilde{b}_2$ be the bigraded closed arcs corresponding to $L_1,L_2$ respectively.
Suppose $\tilde{b}_1$ and $\tilde{b}_2$ share only one end point.
Then, $p(L_1,L_2) = I(\tilde{b}_1,\tilde{b}_2)|_{q_1=1,q_2=q^d}$ implies that $\lambda = \chi(L_1,L_2) = \pm 1$.
Thus, by the Picard--Lefschetz formula, $(\tau_2 \circ \tau_1)^3$ acts on $H_d(X_2^d,\mathbb{Z}) = \langle [L_1],[L_2] \rangle$ as
	\[
		\left(
		\begin{array}{cc}
			-1 & \mp 1\\
			\pm 1 & 0
		\end{array}
		\right)^3
		=
		\left(
		\begin{array}{cc}
			1 & 0\\
			0 & 1
		\end{array}
		\right),
	\]
{\it i.e.} it is in the symplectic Torelli group of $X_2^d$.
As we have seen, the categorical entropy of $\tau_2 \circ \tau_1$ (and also $(\tau_2 \circ \tau_1)^3$) is positive whenever $\dim HF^\bullet(L_2,L_1) \geq 3$.
Therefore, in such a case, $(\tau_2 \circ \tau_1)^3$ gives a higher-dimensional counterexample to Kikuta--Takahashi's conjecture coming from an element in the symplectic Torelli group having positive categorical entropy.
This answers a question in \cite[Problem 1.2]{Kikuta-Ouchi} about the existence of such an autoequivalence for higher-dimensions.
\end{rmk}


\bibliography{Bibliography-entropy}
\bibliographystyle{alphaurl}

\end{document}